\theoremstyle{plain}
\newtheorem{lemma}[equation]{Lemma}
\newtheorem{proposition}[equation]{Proposition}
\newtheorem{theorem}[equation]{Theorem}
\newtheorem{corollary}[equation]{Corollary}
\newtheorem{question}[equation]{Question}
\theoremstyle{definition}
\newtheorem{definition}[equation]{Definition}
\theoremstyle{remark}
\newtheorem{remark}[equation]{Remark}
\numberwithin{equation}{section}
\def\norm#1.#2.{\lVert#1\rVert_{#2}}
\def\Norm#1.#2.{\bigl\lVert#1\bigr\rVert_{#2}}
\def\NOrm#1.#2.{\Bigl\lVert#1\Bigr\rVert_{#2}}
\def\NORm#1.#2.{\biggl\lVert#1\biggr\rVert_{#2}}
\def\NORM#1.#2.{\Biggl\lVert#1\Biggr\rVert_{#2}}
\def\ip#1,#2,{\langle #1,#2\rangle}
\def\Ip#1,#2,{\bigl\langle#1,#2\bigr\rangle}
\def\IP#1,#2,{\Bigl\langle#1,#2\Bigr\rangle}
\def\XXint#1#2#3{{\setbox0=\hbox{$#1{#2#3}{\int}$} \vcenter{\hbox{$#2#3$}}\kern-.5\wd0}}
\def\eqdef{\stackrel{\mathrm{def}}{{}={}}}
\def\Jrd{\textnormal {Jrd}}
\def\diag{\textnormal {diag}}
\DeclareMathOperator{\sgn}{sgn}
\DeclareMathOperator{\orb}{Orb}
\begin{document}

%%%%%%%%%%%%%%%%%%%%%%%%%%%%%  Title
\title[Dynamics of tuples of matrices in Jordan form]{Dynamics of tuples of matrices in Jordan form}

\subjclass[2010]{Primary: 47A16 Secondary: 11J72, 15A21}
\keywords{hypercyclic operator, Jordan form, Kronecker's theorem}
\thanks{$^{(1)}$Partially supported by CAMGSD-LARSYS through Funda\c{c}\~{a}o para a Ci\^{e}ncia e Tecnologia (FCT/Portugal), program POCTI/FEDER.}
\author[G. Costakis]{George Costakis} \email{costakis@math.uoc.gr} \address{Department of Mathematics, University of Crete, 71409 Knossos Av., Iraklion, GREECE.}

\author[I. Parissis]{Ioannis Parissis$^{(1)}$} \email{ioannis.parissis@gmail.com} 
\address{Centro de An\'{a}lise Matem\'{a}tica, Geometria e Sistemas Din\^{a}micos, Departamento de Matem\'atica, Instituto Superior T\'ecnico, Av. Rovisco Pais, 1049-001 Lisboa, PORTUGAL}.

\begin{abstract} A tuple $(T_1,\ldots,T_k)$ of $n\times n$ matrices over $\mathbb R$ is called hypercyclic if for some $x\in\mathbb R^n$ the set $\{T_1^{m_1}T_2 ^{m_2}\cdots T_k ^{m_k}x:m_1,m_2,\ldots,m_k\in\mathbb N_0 \}$ is dense in $\mathbb R^n$.
We prove that the minimum number of $n\times n$ matrices in Jordan form over $\mathbb R$ which form a hypercyclic tuple is $n + 1$. This answers a question of Costakis, Hadjiloucas and Manoussos.
    \end{abstract}

\maketitle

%%%%%%%%%%%%%%%%%%%%%%%%%%%%%% SECTION  SECTION SECTION
%%%%%%%%%%%%%%%%%%%%%%%%%%%%%% SECTION  SECTION SECTION
\thispagestyle{empty}

\section{Introduction} \label{s.intro}

Let $X$ be a separable Banach space either over $\mathbb R$ or $\mathbb C$. Recall that a bounded linear operator $T:X\rightarrow X$ is \emph{hypercyclic} if there exists a vector $x\in X$ whose orbit $\orb(T,x)=\{x,Tx,T^2x,\ldots\}$ is dense in $X$. For a thorough study of hypercyclicity we refer to the recent book \cite{BM}. Although hypercyclicity is a phenomenon which only appears in infinite dimensions, see \cite{KI}, Feldman recently established that this is not the case if one considers more than one operator; see \cite{F}.

Following Feldman from \cite{F}, we give the following definition:

\begin{definition}\label{d.hyper} Let $T=(T_1,\ldots,T_k)$ be a $k$-tuple of commuting continuous linear operators, acting on $X$. The $k$-tuple $T$ will be called \emph{hypercyclic} if there exists a vector $x\in X$ such that the set
    \begin{equation*}
        \{T^mx:m\in\mathbb N_0 ^k\}=\{T_1^{m_1}T_2 ^{m_2}\cdots T_k ^{m_k}x:m_1,m_2,\ldots,m_k\in\mathbb N_0 \},
    \end{equation*}
    is dense in $X$. Here we use the standard multi-index notation where $m=(m_1,\ldots,m_k)\in\mathbb N_0 ^k$ and $T^m=T_1 ^{m_1}\ldots T_k ^{m_k}$.
\end{definition}

Here and throughout the paper $\mathbb N$ denotes the set of positive integers while $\mathbb N _0$ denotes the set of non-negative integers, $\mathbb N _0=\mathbb N\cup \{0\}$.

Specializing to the case $X=\mathbb R^n$ or $X=\mathbb C^n$ we have that $T$ is a $k$-tuple of commuting $n\times n$ matrices over $\mathbb R$ or $\mathbb C$ respectively. In \cite{F}, Feldman proved that in $\mathbb C^n$ there exist $(n+1)$-tuples of \emph{simultaneously diagonalizable} matrices which are hypercyclic. Furthermore, Feldman proved that there is no hypercyclic $n$-tuple of \emph{simultaneously diagonalizable} matrices acting on $\mathbb R^n$ or $\mathbb C^n$. On the other hand, in \cite{CHM}, the authors proved that on $\mathbb R^n$, $n\geq 2$, there exist $n$-tuples of \emph{non-simultaneously diagonalizable} matrices over $\mathbb R$ which are hypercyclic. For further results on hypercyclic tuples of operators in finite or infinite dimensions look at \cite{F1},\cite{F2}, \cite{KE}, \cite{JA} and \cite{CHM1}.

In this note we restrict our attention to $k$-tuples of non-simultaneously diagonalizable matrices on $\mathbb R^n$, $n\geq 2$, where every operator in the $k$-tuple $T$ is in Jordan form. In general we will write $\Jrd_{l,\gamma}$ for the Jordan block of dimension $l$ with eigenvalue $\gamma$, that is:
\begin{align*}
    \Jrd_{l,\gamma}\coloneqq
    \begin{pmatrix}
        \gamma & 1 & 0 & \ldots & 0\\
        0 &\gamma & 1 & \ddots & \vdots\\
        0 &0 &\gamma& \ddots & 0\\
        \vdots & \ddots & \ddots & \ddots & 1\\
        0 & \ldots &0 &0 &\gamma
    \end{pmatrix}
    ,
\end{align*}
where $\gamma\in\mathbb{R}$. A general $n\times n$ matrix in Jordan form over $\mathbb R$, considered here, will consist of $p$ Jordan blocks and in general can be represented as
\begin{equation*}
    J=\diag\{\Jrd_{n_1,\gamma_1},\Jrd_{n_2,\gamma_2},\ldots,\Jrd_{n_p,\gamma_p}\}=\Jrd_{n_1,\gamma_1}\oplus\Jrd_{n_2,\gamma_2}\oplus\cdots\oplus\Jrd_{n_p,\gamma_p},
\end{equation*}
where $n_1+\cdots+n_p=n$ and $\gamma_1,\ldots,\gamma_p\in\mathbb R$.

A few remarks are in order:
\begin{remark}\label{r.structure}
    Throughout the exposition we consider $k$-tuples $T=(T_1,\ldots,T_k)$ of $n\times n$ matrices in Jordan form over $\mathbb R$. In general each $T_\nu$, $1\leq \nu \leq k$, will have a different number of Jordan blocks of different dimensions. However, since we consider $k$-tuples of operators that \emph{commute}, it is not hard to see that all the matrices in the $k$-tuple must have the same form, that is, it is enough to consider the case that all the $k$ operators $T_\nu$ have $p$ Jordan blocks of dimensions $n_1,\ldots,n_p$, where $p$ and $n_1,\ldots,n_p$ do not depend on which term $\nu$ in the $k$-tuple we are considering. Bearing this in mind, each operator $T_\nu$ can be written in the form
    \begin{equation}
        \label{e.Jrdterm} T_\nu= \Jrd_{n_1 ,\gamma_\nu ^{(1)}} \oplus \cdots \oplus \Jrd_{ {n_p,\gamma_\nu ^{(p)}}},
    \end{equation}
    where the number of blocks $p$ and the corresponding dimensions $n_1,\ldots,n_p$ are fixed throughout the $k$-tuple. Thus, the real number $\gamma_\nu ^{(b)}$ is the eigenvalue of the $b$-th Jordan block in the $\nu$-th operator of the $k$-tuple.
\end{remark}
\begin{remark} Suppose for a moment that $n_1=\cdots=n_p=1$, in other words, that all the Jordan blocks in the $k$-tuple are of dimension one. Because of Remark \ref{r.structure}, this means that all the operators in the $k$-tuple are diagonal. However, this case has already been considered by Feldman in \cite{F}. We will therefore assume for the rest of the paper that $n_b>1$ for at least one block in each of the matrices of the $k$-tuple.
\end{remark}
Concerning Jordan forms in $\mathbb R^2$, the following Theorem was proved in \cite{CHM}
\begin{theorem}
    [Costakis, Hadjiloucas, Manoussos \cite{CHM}]\label{t.Jrd2x2} There exist $2\times2$ matrices $A_j$, $j=1,2,3,4$ in Jordan form over $\mathbb R$ such that $(A_1,A_2,A_3,A_4)$ is hypercyclic.
\end{theorem}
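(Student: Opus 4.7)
The plan is to take all four matrices of the shared Jordan form $A_j = \gamma_j I + N$ with $N=\Jrd_{2,0}$ the standard nilpotent off-diagonal, and symmetric real eigenvalues $\pm\lambda,\pm\mu$, and then reduce density of the orbit to a two-dimensional Diophantine approximation problem that decouples into two independent one-dimensional ones, each solvable by Kronecker/Weyl. Since $N^2=0$, one has $A_j^{m_j}=\gamma_j^{m_j}I + m_j\gamma_j^{m_j-1}N$, and the product telescopes to
\begin{equation*}
A_1^{m_1}A_2^{m_2}A_3^{m_3}A_4^{m_4} = P_{\mathbf{m}}\bigl(I + S_{\mathbf{m}}N\bigr),\quad P_{\mathbf{m}}\coloneqq\prod_{j=1}^{4}\gamma_j^{m_j},\ \ S_{\mathbf{m}}\coloneqq\sum_{j=1}^{4}\frac{m_j}{\gamma_j}.
\end{equation*}
Evaluated at $x=(0,1)^{t}$ this gives the orbit point $(P_{\mathbf{m}}S_{\mathbf{m}},P_{\mathbf{m}})^{t}$, and density of $\orb(T,x)$ in $\mathbb{R}^2$ becomes the statement that for every $(y_1,y_2)\in\mathbb{R}^2$ with $y_2\neq 0$ and every $\epsilon>0$, some $\mathbf{m}\in\mathbb{N}_0^4$ satisfies $|P_{\mathbf{m}}-y_2|<\epsilon$ and $|S_{\mathbf{m}}-y_1/y_2|<\epsilon/|y_2|$.

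Next I would make the eigenvalues symmetric by setting $\gamma_1=\lambda$, $\gamma_2=-\lambda$, $\gamma_3=\mu$, $\gamma_4=-\mu$ with $\lambda>1>\mu>0$. Writing $k=m_1+m_2$, $r=m_3+m_4$, $\ell=m_1-m_2$, $s=m_3-m_4$, one obtains $|P_{\mathbf{m}}|=\lambda^{k}\mu^{r}$, $\sgn P_{\mathbf{m}}=(-1)^{m_2+m_4}$, and $S_{\mathbf{m}}=\ell/\lambda+s/\mu$. The key point is that the modulus condition $|P_{\mathbf{m}}|\approx|y_2|$ depends only on $(k,r)$ while the condition $S_{\mathbf{m}}\approx y_1/y_2$ depends only on $(\ell,s)$, so the two-dimensional problem decouples; the nonnegativity constraints $m_j\geq 0$ translate into the compatibility conditions $k\geq|\ell|$, $r\geq|s|$, $k\equiv\ell\pmod{2}$, $r\equiv s\pmod{2}$, and the sign of $y_2$ fixes the parity of $m_2+m_4=\tfrac{1}{2}(k-\ell)+\tfrac{1}{2}(r-s)$.

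Finally, imposing the irrationality conditions $\log\lambda/\log\mu\notin\mathbb{Q}$ and $\lambda/\mu\notin\mathbb{Q}$—both generic and realized by an explicit choice (for example $\lambda=2$, $\mu=1/e$)—one gets from $\log\lambda>0>\log\mu$ that $\{k\log\lambda+r\log\mu:k,r\in\mathbb{N}_0\}$ is dense in $\mathbb{R}$, so $|P_{\mathbf{m}}|\approx|y_2|$ is achievable, and independently $\{\ell/\lambda+s/\mu:\ell,s\in\mathbb{Z}\}$ is dense in $\mathbb{R}$, giving $S_{\mathbf{m}}\approx y_1/y_2$. Since Weyl equidistribution remains valid on arithmetic progressions, $k$ and $r$ may be restricted to any prescribed residue class modulo $4$—chosen to enforce all the parity and sign constraints at once—while still being taken arbitrarily large so that $k\geq|\ell|$ and $r\geq|s|$ automatically hold. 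The main obstacle is precisely this bookkeeping, namely checking that the four parities and the sign of $P_{\mathbf{m}}$ can be fixed compatibly with both one-dimensional approximations; once that is arranged, the hypercyclicity of $(A_1,A_2,A_3,A_4)$ follows from two separate applications of the one-dimensional Kronecker/Weyl density theorem.
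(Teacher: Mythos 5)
Your proof is essentially correct, and it takes a route that is genuinely different from what the paper does. Note first that the paper does not reprove Theorem \ref{t.Jrd2x2} at all -- it is quoted from \cite{CHM} -- and the paper's own contribution (Theorem \ref{t.njordan} with $n=2$, worked out in Subsection \ref{sub.n=2}) is strictly stronger: \emph{three} $2\times 2$ Jordan blocks already suffice, and by Proposition \ref{p.noJordanMatrix}(ii) this is optimal. Your construction uses four blocks with the symmetric eigenvalues $\pm\lambda,\pm\mu$, which pays off by \emph{decoupling} the hypercyclicity condition: after passing to the variables $k=m_1+m_2$, $r=m_3+m_4$, $\ell=m_1-m_2$, $s=m_3-m_4$, the modulus $|P_{\mathbf m}|=\lambda^k\mu^r$ involves only $(k,r)$ and $S_{\mathbf m}=\ell/\lambda+s/\mu$ involves only $(\ell,s)$, so you only ever invoke \emph{one-dimensional} Kronecker/Weyl density (once with the positivity constraint $k,r\ge 0$ handled by $\log\lambda>0>\log\mu$, once over $\mathbb Z^2$). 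The parity and sign bookkeeping you flag as the ``main obstacle'' does indeed close up: fixing $\ell\equiv s\equiv 0\pmod 4$ and then $k,r$ in residue classes modulo $4$ chosen so that $\tfrac12(k-\ell)+\tfrac12(r-s)$ has the prescribed parity, the two one-dimensional density statements remain valid along these arithmetic progressions and with $k,r$ taken large enough to dominate $|\ell|,|s|$. By contrast, the paper's three-matrix construction (Lemma \ref{l.positive2}, Theorem \ref{t.vectorsu2}) cannot decouple; it must instead build the three column vectors $u_1,u_2,u_3$ so that $u_1,u_2$ are $\mathbb R$-independent and $u_3=-(c_1u_1+c_2u_2)$ with $c_1,c_2$ positive \emph{irrational}, then apply genuinely two-dimensional Kronecker density of $\mathbb N_0 u_3$ modulo $\mathbb Z u_1+\mathbb Z u_2$. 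The price of optimality is the nonlinear equation of Lemma \ref{l.nonlinear} and the determinant argument, none of which you need; the price of your more elementary route is the extra matrix.
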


The authors in \cite{CHM} raised the following Question:

\begin{question} \label{q.chm} What is the minimum number of $2\times 2$ matrices in Jordan form over $\mathbb R$ so that their tuple is hypercyclic?
\end{question}

In Theorem \ref{t.Jrd2x2}, since the dimension is two, all the matrices in Jordan form are necessarily Jordan blocks, that is each matrix has a single eigenvalue (remember we exclude the case that one of the matrices is diagonal). It is not hard to see that the conclusion of Theorem \ref{t.Jrd2x2} is exceptional as in $\mathbb R^n$, for $n\geq 3$, there is no $k$-tuple of matrices, each one being exactly a Jordan block, which is hypercyclic:
\begin{proposition}
    \label{p.noJordanBlock} Let $n\geq 3$ and $k\in \mathbb N$. For any $\gamma_1,\ldots,\gamma_k\in\mathbb R$ consider the $k$-tuple of Jordan blocks $J=(\Jrd_{n,\gamma_1},\Jrd_{n,\gamma_2},\ldots,\Jrd_{n,\gamma_k})$. Then $J$ is not hypercyclic.
\end{proposition}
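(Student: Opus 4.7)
The plan is to locate a single algebraic inequality satisfied by every orbit vector that carves out a nonempty open subset of $\mathbb R^n$. Write $T_\nu = \Jrd_{n,\gamma_\nu} = \gamma_\nu I + N$, where $N = \Jrd_{n,0}$ is the nilpotent shift ($Ne_i = e_{i-1}$ for $i \geq 2$, $Ne_1 = 0$). Each $T_\nu$ is a polynomial in $N$, so the matrices commute and
\begin{equation*}
T_1^{m_1}\cdots T_k^{m_k} \;=\; P_{\vec m}(N), \qquad P_{\vec m}(z) \coloneqq \prod_{\nu=1}^k (\gamma_\nu + z)^{m_\nu}.
\end{equation*}
Expanding $P_{\vec m}(z) = c_0 + c_1 z + c_2 z^2 + \cdots$ at $z=0$ and using $N^n = 0$, every orbit vector takes the form $y = \sum_{j=0}^{n-1} c_j N^j x$; its last three coordinates are
\begin{equation*}
y_n = c_0 x_n,\qquad y_{n-1} = c_0 x_{n-1} + c_1 x_n,\qquad y_{n-2} = c_0 x_{n-2} + c_1 x_{n-1} + c_2 x_n.
\end{equation*}

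Assume for the main case that every $\gamma_\nu \neq 0$ and that $x_n \neq 0$. Differentiating the logarithmic derivative identity $P_{\vec m}'(z)/P_{\vec m}(z) = \sum_\nu m_\nu/(\gamma_\nu + z)$ once and twice and evaluating at $z = 0$ yields
\begin{equation*}
c_0 = \prod_\nu \gamma_\nu^{m_\nu},\qquad c_1 = c_0\, S_1,\qquad c_2 = \tfrac{c_0}{2}\bigl(S_1^2 - S_2\bigr), \qquad S_j \coloneqq \sum_\nu \frac{m_\nu}{\gamma_\nu^j},
\end{equation*}
and substitution produces the central identity
\begin{equation*}
c_1^2 - 2\, c_0\, c_2 \;=\; c_0^2\, S_2 \;=\; c_0^2 \sum_\nu \frac{m_\nu}{\gamma_\nu^2} \;\geq\; 0,
\end{equation*}
since $m_\nu \geq 0$ and $\gamma_\nu^2 > 0$. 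Because $n \geq 3$ and $x_n \neq 0$, the three coordinate equations above invertibly express $(c_0, c_1, c_2)$ in terms of $(y_{n-2}, y_{n-1}, y_n)$, so $c_1^2 \geq 2 c_0 c_2$ translates into a nontrivial quadratic inequality on the triple $(y_{n-2}, y_{n-1}, y_n)$. Its complement is a nonempty open cone in $\mathbb R^3$, whose preimage is a nonempty open subset of $\mathbb R^n$ missed by the entire orbit.

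The degenerate cases are dispatched separately. If $x_n = 0$ then $y_n = c_0 x_n = 0$ for every orbit vector, so the orbit lies in a hyperplane. If some $\gamma_\nu = 0$ then $T_\nu^{m_\nu} = N^{m_\nu}$ is zero for $m_\nu \geq n$ and lies in $\operatorname{range}(N) \subseteq \{y_n = 0\}$ when $m_\nu \geq 1$; the only orbit vectors outside $\{y_n = 0\}$ correspond to $m_\nu = 0$ for every $\nu$ with $\gamma_\nu = 0$, and for these the previous analysis applies to the sub-tuple with nonzero eigenvalues. Hence, in every case, the orbit is contained in a proper closed semi-algebraic subset of $\mathbb R^n$, so $J$ is not hypercyclic. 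The only nontrivial technical step is the identity $c_1^2 - 2 c_0 c_2 = c_0^2 S_2$, and this is precisely the obstruction absent when $n = 2$ (since then $N^2 = 0$ and $c_2$ never enters the orbit), consistent with Theorem \ref{t.Jrd2x2}.
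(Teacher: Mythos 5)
Your proof is correct, and its algebraic core is exactly the paper's: both restrict attention to the last three coordinates (which, since $J^m$ is upper triangular, depend only on $x_{n-2},x_{n-1},x_n$), and both exploit the identity $d_2^{(m)}=\tfrac12\bigl((d_1^{(m)})^2-S_2\bigr)$ — equivalently $c_1^2-2c_0c_2=c_0^2 S_2$ in your normalization — together with the positivity $S_2=\sum_\nu m_\nu/\gamma_\nu^2\geq 0$. Where you differ is in how the obstruction is deployed. The paper fixes a target $w=(y_1+y_2+y_3,\;y_2+y_3,\;y_3)$, extracts a subsequence along which $\gamma^m\to1$ and $d_1^{(m)}\to1$, and then concludes $S_2\to-1$, a contradiction; this is a pointwise, sequential argument. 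You instead package the same relation as a \emph{uniform} semi-algebraic inequality $c_1^2-2c_0c_2\geq 0$ satisfied by every orbit vector, and observe that because the quadratic form $c_1^2-2c_0c_2$ is indefinite and the change of coordinates $(c_0,c_1,c_2)\mapsto(y_n,y_{n-1},y_{n-2})$ is invertible when $x_n\neq 0$, the orbit misses a nonempty open cone. This is cleaner, avoids the limit extraction, and immediately yields the stronger conclusion (which the paper only notes later in a remark, as a consequence of its argument for Proposition \ref{p.noJordanMatrix}) that the orbit of such a $k$-tuple is not even somewhere dense. Your treatment of the degenerate cases $x_n=0$ and $\gamma_\nu=0$ is also sound; the latter is handled more implicitly in the paper, which effectively works under $\gamma_\nu\neq 0$ as set up in Lemma \ref{l.powers}.
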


Thus in dimension $n\geq 3$ we have to consider $k$-tuples $T=(T_1,\ldots,T_k)$ where each one of the matrices $T_\nu$, $1\leq\nu\leq k$, is in Jordan form over $\mathbb R$ and consists of more than one Jordan blocks.

For $n\times n$ matrices in Jordan form over $\mathbb R$ we have the following result in the negative direction:
\begin{proposition}
    \label{p.noJordanMatrix}For $n,k\in\mathbb N$ we consider a $k$-tuple of $n\times n$ matrices in Jordan form over $\mathbb R$, $T=(T_1,\ldots,T_k)$, where each $T_\nu$ consists of $p$ Jordan blocks of dimensions $n_1,\ldots,n_p$ as in \eqref{e.Jrdterm}. \\
    (i) Suppose that $n_b\geq 3$ for at least one $b\in\{1,2,\ldots,p\}$. Then $T$ is not hypercyclic.\\
    (ii) If $k=n$ then $T$ is not hypercyclic.
\end{proposition}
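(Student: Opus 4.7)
The plan is to decompose the orbit block-by-block and exploit the power-sum structure of the polynomial coefficients that arise. Using the binomial identity
\[
T_\nu^{m_\nu}\big|_{V_b} = \sum_{j=0}^{n_b-1}\binom{m_\nu}{j}\bigl(\gamma_\nu^{(b)}\bigr)^{m_\nu-j} N^j
\]
with $N$ the standard nilpotent shift satisfying $Ne_i=e_{i-1}$, one obtains
\[
T^m\big|_{V_b} = \lambda_b(m)\sum_{j=0}^{n_b-1} C_j^{(b)}(m)\, N^j, \qquad \lambda_b(m):=\prod_\nu \bigl(\gamma_\nu^{(b)}\bigr)^{m_\nu},
\]
where $\sum_j C_j^{(b)}(m)\,z^j = \prod_\nu (1+z/\gamma_\nu^{(b)})^{m_\nu}$. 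Expanding the generating function one reads off $C_1^{(b)}=S_1^{(b)}$ and $C_2^{(b)}=\tfrac{1}{2}\bigl((S_1^{(b)})^2-S_2^{(b)}\bigr)$, where $S_k^{(b)}(m):=\sum_\nu m_\nu/(\gamma_\nu^{(b)})^k$. One may assume every $\gamma_\nu^{(b)}\neq 0$, since a zero eigenvalue makes $T_\nu|_{V_b}$ nilpotent and traps the orbit on $V_b$ inside a finite union of proper subspaces.

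For part (i), fix a block $b$ with $l:=n_b\geq 3$ and let $a_1,\dots,a_l$ denote the coordinates of $x|_{V_b}$. I may assume $a_l\neq 0$, else the orbit sits in $\{y_l=0\}$. The formulas above give the top three components
\[
y_l=\lambda_b a_l,\quad y_{l-1}=\lambda_b(a_{l-1}+S_1^{(b)} a_l),\quad y_{l-2}=\lambda_b(a_{l-2}+S_1^{(b)} a_{l-1}+C_2^{(b)} a_l).
\]
Introduce the rational invariant $G(y):=y_{l-1}^2/y_l^2-2\,y_{l-2}/y_l$ on the open dense set $\{y_l\neq 0\}$. A direct expansion together with the identity $(S_1^{(b)})^2-2C_2^{(b)}=S_2^{(b)}$ yields
\[
G(T^m x|_{V_b}) = \kappa + S_2^{(b)}(m), \qquad \kappa := (a_{l-1}/a_l)^2 - 2\,a_{l-2}/a_l.
\]
The decisive observation is that over $\mathbb R$ the squares $(\gamma_\nu^{(b)})^2$ are strictly positive, so $S_2^{(b)}(m)\geq 0$ for every $m\in\mathbb N_0^k$; hence $G\geq \kappa$ on the orbit. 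Since $G$ is continuous on $\{y_l\neq 0\}$ and surjects onto $\mathbb R$ (for instance at $y_{l-1}=0$, $y_l=1$), the preimage $G^{-1}((-\infty,\kappa-1))$ is a nonempty open subset of $V_b$ that the orbit never visits, forcing non-density.

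For part (ii), suppose $k=n$. I assemble the $n=p+\sum_b(n_b-1)$ linear functions $\log|\lambda_b(m)|$ and $S_j^{(b)}(m)$ (for $b=1,\dots,p$ and $1\leq j\leq n_b-1$) into one linear map $\Phi\colon\mathbb Z^n\to\mathbb R^n$. On the open dense subset of $\mathbb R^n$ where every block-top coordinate is nonzero, the orbit point is a homeomorphic image of the finite sign datum $(\sgn\lambda_b(m))_b$ together with $\Phi(m)$: within each block the components are recovered from $\lambda_b$ and the ratios $y^{(b)}_{n_b-j}/y^{(b)}_{n_b}$, and the passage from those ratios to $S_1^{(b)},\dots,S_{n_b-1}^{(b)}$ is invertible polynomial. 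Thus density of the orbit in $\mathbb R^n$ would force $\Phi(\mathbb N_0^n)$ to be dense in $\mathbb R^n$. But $\Phi(\mathbb Z^n)$ is a finitely generated subgroup of $\mathbb R^n$ of $\mathbb Z$-rank at most $n$: if its generators are $\mathbb R$-linearly independent they span a discrete lattice, while otherwise they lie in a proper linear subspace; in either case this subgroup is not dense in $\mathbb R^n$, and neither is $\Phi(\mathbb N_0^n)\subset\Phi(\mathbb Z^n)$, a contradiction.

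The main obstacle will be part (i): spotting the quadratic invariant $G$ and recognizing that the real-variable positivity $(\gamma_\nu^{(b)})^2>0$ is precisely what produces the one-sided bound on $S_2^{(b)}$; in the complex setting squares need not be positive and this argument breaks down. Part (ii) is then a direct dimension count together with the elementary fact that a subgroup of $\mathbb R^n$ of $\mathbb Z$-rank at most $n$ is never dense.
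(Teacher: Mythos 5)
Your proof is correct and rests on the same two mechanisms as the paper's: for part (i), the real-specific positivity $\sum_\nu m_\nu/(\gamma_\nu^{(b)})^2\geq 0$, and for part (ii), the fact that the image of $\mathbb Z^n$ under a linear map $\mathbb R^n\to\mathbb R^n$ is never dense. The presentation, however, is genuinely different in two respects. For (i), the paper targets a single carefully chosen vector $w=(y_1+y_2+y_3,\,y_2+y_3,\,y_3)$, extracts a convergent subsequence, and derives the impossible limit $\sum m_j/\gamma_j^2\to -1$; you instead distill the rational invariant $G(y)=y_{l-1}^2/y_l^2-2\,y_{l-2}/y_l$, which is bounded below on the orbit, and so exhibit an explicit nonempty open set the orbit misses. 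Your form of the argument makes immediately visible the stronger fact (which the paper records only as a remark) that the orbit is not even somewhere dense, and it isolates exactly where the real hypothesis enters. For (ii), the paper first disposes of blocks of dimension $\geq 3$ via part (i), then invokes Corollary \ref{c.<3hyper} to get a square matrix $L$, and argues it would have to be invertible, forcing $\overline{\mathbb N_0^n}=\mathbb R^n$; you build the $n$-dimensional linear map $\Phi$ directly from $\log|\lambda_b|$ and the power sums $S_1^{(b)},\dots,S_{n_b-1}^{(b)}$ for arbitrary block dimensions, using Newton's identities to invert the passage from the $C_j^{(b)}$'s back to the $S_j^{(b)}$'s. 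This makes part (ii) self-contained (not reliant on part (i)), at the mild cost of the extra Newton's-identities step that the paper sidesteps by working only in block dimension $\leq 2$. One small point worth flagging: your one-line dismissal of a zero eigenvalue ("traps the orbit on $V_b$ inside a finite union of proper subspaces") is a bit glib, since for $m_\nu=0$ the restriction $T_\nu^{0}|_{V_b}=I$ does not by itself confine the orbit to a proper subspace; the cleaner observation is simply that the top block coordinate of $T^m x$ on $V_b$ is $\lambda_b(m)a_{n_b}$, which vanishes whenever $m_\nu\geq 1$, so the subset of the orbit that can approach a generic target has $m_\nu=0$, reducing to a shorter tuple — the paper avoids the issue by assuming nonzero eigenvalues throughout, as is visible in Lemma \ref{l.powers}.
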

\begin{remark}
    Observe that part (ii) of Proposition \ref{p.noJordanMatrix} is only interesting when all the Jordan blocks in each one of the matrices of the $n$-tuple have dimension $n_b\leq 2$. Otherwise, part (i) gives a stronger statement.
\end{remark}

The main result of this paper is the following theorem:
\begin{theorem}
    \label{t.njordan} Fix a positive integer $n\geq 2$ and let $p_1\geq 1$ and $p_2\geq 0$ be given non-negative integers such that $2p_1+p_2=n$. There exists a hypercyclic $(n+1)$-tuple of $n\times n$ matrices in Jordan form over $\mathbb R$ where each matrix in the tuple consists of $p_1$ Jordan blocks of dimension $2$ and $p_2$ Jordan blocks of dimension $1$.
\end{theorem}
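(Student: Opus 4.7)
The strategy is to construct an explicit commuting $(n+1)$-tuple, compute its iterates using the Jordan-block structure, and reduce hypercyclicity to a semigroup density statement in $\mathbb{R}^n$ that can be handled by Kronecker's theorem combined with a positive convex dependence among $n+1$ generating vectors.

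For $\nu=1,\ldots,n+1$ I would take
\[
    T_\nu=\Jrd_{2,\alpha_\nu^{(1)}}\oplus\cdots\oplus \Jrd_{2,\alpha_\nu^{(p_1)}}\oplus\Jrd_{1,\beta_\nu^{(1)}}\oplus\cdots\oplus \Jrd_{1,\beta_\nu^{(p_2)}},
\]
whose common block pattern forces commutativity. Writing $\Jrd_{2,\alpha}=\alpha I+N$ with $N^2=0$ gives $\Jrd_{2,\alpha}^m=\alpha^mI+m\alpha^{m-1}N$; blockwise multiplication then shows that $T_1^{m_1}\cdots T_{n+1}^{m_{n+1}}$ acts on the $b$-th $2\times2$ block as the upper triangular Toeplitz matrix with diagonal $P_b(m)=\prod_\nu(\alpha_\nu^{(b)})^{m_\nu}$ and superdiagonal $P_b(m)q_b(m)$, where $q_b(m)=\sum_\nu m_\nu/\alpha_\nu^{(b)}$, and on the $c$-th $1\times1$ block as scalar multiplication by $R_c(m)=\prod_\nu(\beta_\nu^{(c)})^{m_\nu}$. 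Choosing $x=\bigl((x_b,y_b),(z_c)\bigr)$ with $y_b,z_c\neq0$, the orbit equals $\bigl((P_b(x_b+q_by_b),P_by_b),R_cz_c\bigr)$; inverting these explicit relations, hypercyclicity is equivalent to density of the map $m\mapsto \bigl((P_b(m),q_b(m))_{b},(R_c(m))_{c}\bigr)$ from $\mathbb N_0^{n+1}$ into the open dense subset $(\mathbb R\setminus\{0\})^{p_1}\times\mathbb R^{p_1}\times(\mathbb R\setminus\{0\})^{p_2}\subset\mathbb R^n$.

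The signs of $P_b$ and $R_c$ depend only on $m\bmod 2$ and on the signs of the eigenvalues, while $\log|P_b|$ and $\log|R_c|$ are linear in $m$. Arranging the eigenvalue sign pattern so that the induced $\mathbb F_2$-linear map $\mathbb F_2^{n+1}\to\mathbb F_2^{p_1+p_2}$ is surjective, every target sign configuration is realised on some coset $m_0+2\mathbb Z^{n+1}$. On such a coset, taking logarithms reduces the problem to proving that the sub-semigroup of $\mathbb R^n$ generated by the $n+1$ vectors
\[
    v_\nu=\Bigl(\bigl(1/\alpha_\nu^{(b)}\bigr)_{b=1}^{p_1},\,\bigl(\log|\alpha_\nu^{(b)}|\bigr)_{b=1}^{p_1},\,\bigl(\log|\beta_\nu^{(c)}|\bigr)_{c=1}^{p_2}\Bigr),\qquad \nu=1,\ldots,n+1,
\]
is dense in $\mathbb R^n$. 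For this I would invoke the standard Kronecker-plus-positivity criterion: density holds provided (i) the additive group $\sum_\nu\mathbb Z v_\nu$ is dense in $\mathbb R^n$ (a multidimensional Kronecker condition on the entries of the $v_\nu$), and (ii) there exist reals $\lambda_\nu>0$ with $\sum_\nu\lambda_\nu v_\nu=0$, that is, $0\in\operatorname{int}\operatorname{conv}\{v_1,\ldots,v_{n+1}\}$. Given (ii), any integer combination can be shifted into $\mathbb N_0^{n+1}$ by adding $\lfloor M\lambda_\nu\rfloor$ with $M$ large, the residual error being controlled through (i) via simultaneous Diophantine approximation.

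The main obstacle is to produce eigenvalues $\alpha_\nu^{(b)},\beta_\nu^{(c)}$ realising (i), (ii), and the surjective sign matrix at once. Condition (ii) forces the $v_\nu$ to wrap around the origin, which dictates that some eigenvalues be negative (so that $1/\alpha<0$) and that a mixture of $|\cdot|>1$ and $|\cdot|<1$ occur (for signs of $\log|\cdot|$); condition (i) is a generic but delicate $\mathbb Q$-linear independence statement over the $n(n+1)$ coordinates of the $v_\nu$ together with $1$. My strategy would be to fix first a convenient algebraic model that produces (ii) and the correct $\mathbb F_2$-sign pattern in closed form, then perturb each eigenvalue by a small transcendental amount — chosen via Baker's theorem on logarithms of algebraic numbers — to activate (i) while keeping the open conditions (ii) and $\mathbb F_2$-surjectivity intact. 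Verifying that such a perturbation exists and does not destroy the positive dependency is where the bulk of the delicate estimates will lie.
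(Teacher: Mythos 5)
Your reduction of the problem is essentially the paper's: you compute $T^m$ blockwise (cf.\ Lemma~\ref{l.ktuple} and \eqref{e.d1m}), characterise hypercyclicity through the map $m\mapsto\bigl((P_b(m),q_b(m))_b,(R_c(m))_c\bigr)$ (Lemma~\ref{l.<3hyper}), kill the sign ambiguity by a parity argument on the eigenvalue sign pattern (Lemma~\ref{l.positive}), pass to logarithms to obtain a linear map $m\mapsto L^+m^T$ (Corollary~\ref{c.<3hyper}, Proposition~\ref{p.reduce}), and finally invoke Kronecker's theorem together with a positive linear dependence $\sum_\nu\lambda_\nu v_\nu=0$, $\lambda_\nu>0$, on the $n+1$ column vectors to upgrade integer density to $\mathbb N_0$-density. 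This is exactly the structure of Theorem~\ref{t.vectorsu} and the proof of Theorem~\ref{t.njordan}, and your ``Kronecker-plus-positivity'' criterion is the correct abstract form of what is needed. Incidentally, your formulation of the Kronecker condition as density of $\sum_\nu\mathbb Z v_\nu$ is the precise one, whereas stating it as ``$\mathbb Q$-linear independence over the $n(n+1)$ coordinates together with $1$'' is not right: the relevant condition is $\mathbb Q$-linear independence of $1$ and the coefficients $c_1,\dots,c_n$ of the unique dependence $v_{n+1}=-\sum_{\nu\le n}c_\nu v_\nu$, which are rational functions of the matrix entries, not the entries themselves.

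The genuine gap is in the existence step, and it is the heart of the matter. You propose to fix an algebraic model realising the positivity and sign conditions and then perturb each eigenvalue by a transcendental amount chosen via Baker's theorem. This does not look like it can be made to work as stated. First, the two coordinates of $v_\nu$ coming from the same eigenvalue $\alpha_\nu^{(b)}$, namely $\log\lvert\alpha_\nu^{(b)}\rvert$ and $\pm 1/\alpha_\nu^{(b)}$, are rigidly coupled: a perturbation $\alpha\mapsto\alpha+\varepsilon$ moves both entries along a one-dimensional curve inside $\mathbb R^2$, so you cannot perturb the matrix entries freely, and the image of the admissible parameter space inside $(\mathbb R^n)^{n+1}$ is a proper submanifold of dimension $(p_1+p_2)(n+1)$, far below $n(n+1)$. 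Second, Baker's theorem concerns linear forms in logarithms of \emph{algebraic} numbers; it gives you nothing about mixed quantities involving both $\log\alpha$ and $1/\alpha$, nor about the coefficients $c_\nu$, which are determinant ratios of these mixed entries and are not linear forms in logarithms. Third, the positive dependence must survive the perturbation, so the perturbation must be small, but smallness in Baker's theorem goes the opposite way (it bounds linear forms from \emph{below}). The paper sidesteps all of this by constructing the eigenvalues so that the dependence $u_{n+1}=-\sum c_\nu u_\nu$ holds \emph{exactly}, with the $c_\nu$ being freely prescribed irrationals: given $c_1,\dots,c_{n-1}$ and $c_n$, the last eigenvalue in each block is defined as the unique positive solution of the nonlinear equation of Lemma~\ref{l.nonlinear} (see \eqref{e.main1}--\eqref{e.main2}), and $\mathbb R$-linear independence of $u_1,\dots,u_n$ is then secured by a limiting argument as $c_n\to\infty$ together with the inductive nondegeneracy of Lemma~\ref{l.01constr}. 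Your proposal neither produces this nor offers a working substitute, so as written it leaves the central construction unproved.
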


\begin{remark}
    The case $n=2$ of the previous theorem was proved by M. Kolountzakis in \cite{KOL}.
\end{remark}

Our main Theorem \ref{t.njordan} together with part (ii) of Proposition \ref{p.noJordanMatrix} gives us as a corollary the answer to Question \ref{q.chm}. In fact, we answer the corresponding question in $\mathbb R^n$ for any $n\geq 2$:

\begin{corollary} The minimum number of $n\times n$ matrices in Jordan form over $\mathbb R$ which form a hypercyclic tuple is $n+1$.
\end{corollary}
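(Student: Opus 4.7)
The plan is to combine the upper bound from Theorem \ref{t.njordan} with the lower bound of Proposition \ref{p.noJordanMatrix}(ii), bridged by a short padding argument. For the upper bound I would invoke Theorem \ref{t.njordan} with $p_1 = 1$ and $p_2 = n - 2$ (or any admissible choice with $2p_1 + p_2 = n$), which produces a hypercyclic $(n+1)$-tuple of $n \times n$ matrices in Jordan form over $\mathbb{R}$, so that $n+1$ is achievable.

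The substance is therefore the lower bound: no $k$-tuple of such matrices with $k \leq n$ can be hypercyclic. By Remark \ref{r.structure} it suffices to consider tuples in which every matrix has a common Jordan block structure $(n_1, \ldots, n_p)$, so I will assume this throughout. The case $k = n$ is then exactly Proposition \ref{p.noJordanMatrix}(ii), and I will reduce the case $k < n$ to it by padding.

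Concretely, suppose $T = (T_1, \ldots, T_k)$ with $k < n$ were hypercyclic, with hypercyclic vector $x$. Form the $n$-tuple
\begin{equation*}
\tilde{T} = (T_1, \ldots, T_k, T_1, \ldots, T_1)
\end{equation*}
by appending $n - k$ additional copies of $T_1$. The matrices in $\tilde{T}$ still pairwise commute, are all in Jordan form over $\mathbb{R}$, and share the common block structure. Writing out the orbit of $x$ under $\tilde{T}$, the exponent of $T_1$ becomes $m_1 + m_{k+1} + \cdots + m_n$, which ranges over all of $\mathbb{N}_0$ as $(m_1, m_{k+1}, \ldots, m_n)$ does; hence the orbit of $x$ under $\tilde{T}$ coincides with that under $T$ and is dense. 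Thus $\tilde{T}$ is a hypercyclic $n$-tuple, contradicting Proposition \ref{p.noJordanMatrix}(ii).

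The proof presents no serious obstacle. The only point requiring care is the choice of padding matrix: a naive choice such as the identity would preserve the orbit but in general would destroy the common block structure needed to apply Proposition \ref{p.noJordanMatrix}(ii). Padding with $T_1$ (or any matrix already appearing in the tuple) avoids this and preserves commutativity, Jordan form, and block structure simultaneously; the rest is bookkeeping.
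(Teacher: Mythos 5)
Your proof is correct and follows the paper's approach exactly: Theorem \ref{t.njordan} supplies the upper bound $n+1$, and Proposition \ref{p.noJordanMatrix}(ii) supplies the lower bound at $k=n$. The padding argument you give for the case $k<n$ is a sound supplement to a step the paper leaves implicit, and your care in padding with $T_1$ rather than the identity (so as to preserve the common Jordan-block structure needed for Remark \ref{r.structure} and Proposition \ref{p.noJordanMatrix} to apply) is exactly the right precaution.
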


The rest of the paper is organized as follows. In section \ref{s.not} we present some general guidelines and conventions concerning the notations in this paper. In section \ref{s.aux} we present some calculations which occur frequently in dealing with Jordan blocks. We are able to turn the hypercyclicity condition in a condition which is linear in $(m_1,\ldots,m_k)$. This will turn out to be much more flexible than the original definition of hypercyclicity. In section \ref{s.negative} we take advantage of this linear reformulation of the definition of hypercyclicity in order to prove the negative results contained in Propositions \ref{p.noJordanBlock} and \ref{p.noJordanMatrix}. 

Finally, section \ref{s.main} contains the proof of the main result, Theorem \ref{t.njordan}. The proof of Theorem \ref{t.njordan} relies on the linear reformulation of the problem mentioned before. In particular, we need to construct a matrix $L^+$ such that the set $\{L^+m^T:m\in\mathbb N_0 ^{n+1}\}$ is dense in $\mathbb R^n$. The entries of $L^+$ have a special structure, imposed by the fact that we consider tuples of matrices \emph{in Jordan form} over $\mathbb R$.  In Theorem \ref{t.vectorsu}, we exploit the multi-dimensional version of Kronecker's theorem in order to reduce the construction of the matrix $L^+$ to the construction of a certain set of vectors, the rows of $L^+$, which should be linearly independent over $\mathbb Q$. The construction of these vectors is done by induction in the dimension $n$ in conjunction with the solution of a non-linear equation in Lemma \ref{l.nonlinear} which guarantees that the entries of our vectors will have the desired structure.  We will take up all these issues in the final section of this paper.

%%%%%%%%%%%%%%%%%%%%%%%%%%%%%% SECTION  SECTION SECTION
%%%%%%%%%%%%%%%%%%%%%%%%%%%%%% SECTION  SECTION SECTION
\section{Acknowledgements} We would like to thank the anonymous referee for an expert reading and suggestions that helped us improve the quality of this paper.

%%%%%%%%%%%%%%%%%%%%%%%%%%%%% SECTION  SECTION SECTION
%%%%%%%%%%%%%%%%%%%%%%%%%%%%%% SECTION  SECTION SECTION
\section{Notations}\label{s.not} A few words about the notation are necessary. In many parts of the paper the notation becomes cumbersome due to the nature of the problem. However we consistently use the same notation which we present now. In general we will consider $k$-tuples $T=(T_1,\ldots,T_k) $ of $n\times n$ matrices in Jordan form over $\mathbb R$. The $\nu$-th matrix in the $k$-tuple consists of $p$ Jordan blocks of dimensions $n_1,\ldots,n_p$ with $n_1+\cdots+n_p=n$. Each block is in turn defined by means of its dimension and a real eigenvalue. We will always use the symbol $b$, where $1\leq b \leq p$, to index the blocks. Thus a typical operator in the $k$-tuple is of the form
\begin{align*}
    T_\nu=\Jrd_{n_1,\gamma_\nu ^{(1)}}\oplus\cdots\oplus \Jrd_{n_p,\gamma_\nu ^{(p)}}=\oplus_{b=1} ^p \Jrd_{n_b,\gamma_\nu ^{(b)}}.
\end{align*}
Hopefully these general guidelines will help the reader throughout the exposition.

%%%%%%%%%%%%%%%%%%%%%%%%%%%%%% SECTION  SECTION SECTION
%%%%%%%%%%%%%%%%%%%%%%%%%%%%%% SECTION  SECTION SECTION
\section{Auxiliary Calculations}\label{s.aux} All the results in this note depend heavily on some explicit calculations. There are two types of calculations involved in the proof. The first concerns operations on single Jordan blocks, that is, powers of Jordan blocks and multiplication of powers of Jordan blocks. Such calculations appear for example in the context of Proposition \ref{p.noJordanBlock}. The second type of calculations concerns operations on matrices that consist of several Jordan blocks each. However, since every matrix in the $k$-tuple is block diagonal, all the operations we are considering here go through in each block as in the case of single Jordan blocks; each block behaves independently than the other blocks in terms of taking powers and multiplying with other Jordan matrices in the $k$-tuple, since all matrices have the same block structure.

%%%%%%%%%%%%%%%%%%%%%%%%%%%%%% SECTION  SECTION SECTION
%%%%%%%%%%%%%%%%%%%%%%%%%%%%%% SECTION  SECTION SECTION
\subsection{Operations on single Jordan blocks} First we calculate the powers of a single Jordan block:
\begin{lemma}
    \label{l.powers} For $n,m\in\mathbb N$ and $\gamma\in\mathbb R\setminus\{0\}$ we have
    \begin{align*}
        (\Jrd_{n,\gamma})^{m}=\gamma^{m}
        \begin{pmatrix}
            1 & p_{1} ^{(m)} & p_{2} ^{(m)} & \ldots & p_{n-1} ^{(m)}\\
            0 &1 & p_{1} ^{(m)} & \ddots & \vdots\\
            0 &0 &1& \ddots & p_{2} ^{(m)} \\
            \vdots & \ddots & \ddots & \ddots & p_{1} ^{(m)}\\
            0 & \ldots &0 &0 &1
        \end{pmatrix}
        .
    \end{align*}
    The real numbers $p^{(m)}_{j}$ are defined as
    \begin{eqnarray*}
        p^{(m)}_{j}=\binom{m}{j}\frac{1}{\gamma^{j}}, \, \, \, j=1,\ldots,n-1,
    \end{eqnarray*}
    with the understanding that $\binom{m}{j}=0$ whenever $m<j$.
\end{lemma}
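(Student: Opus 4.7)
The plan is to write $\Jrd_{n,\gamma} = \gamma I + N$, where $I$ is the $n\times n$ identity and $N$ is the nilpotent Jordan block with $1$'s on the first superdiagonal and $0$'s elsewhere. Since $\gamma I$ and $N$ commute, the binomial theorem applies and gives
\begin{equation*}
    (\Jrd_{n,\gamma})^m = (\gamma I + N)^m = \sum_{j=0}^{m}\binom{m}{j}\gamma^{m-j}N^j.
\end{equation*}

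The key observation is that $N$ is nilpotent of index $n$: an easy induction (or direct matrix multiplication) shows that $N^j$ is the matrix whose $(i,i+j)$ entries equal $1$ for $1\le i\le n-j$ and whose other entries vanish; in particular $N^j = 0$ for all $j\ge n$. Hence the sum truncates at $j=n-1$, and we obtain
\begin{equation*}
    (\Jrd_{n,\gamma})^m = \sum_{j=0}^{\min(m,n-1)}\binom{m}{j}\gamma^{m-j}N^j.
\end{equation*}

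Reading off entries, the $(i,i+j)$ entry of $(\Jrd_{n,\gamma})^m$ equals $\binom{m}{j}\gamma^{m-j} = \gamma^m \binom{m}{j}\gamma^{-j} = \gamma^m\,p_j^{(m)}$ for $0\le j\le n-1$, and all entries below the diagonal are zero. Using the convention $\binom{m}{j}=0$ whenever $m<j$ folds the case $m<n-1$ into the same formula. This matches exactly the matrix displayed in the statement, so the lemma follows. There is no serious obstacle here; the only thing to be careful about is the truncation convention for $\binom{m}{j}$ when $m$ is small compared to $n$, which is precisely what the final sentence of the statement addresses.
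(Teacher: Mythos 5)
Your proof is correct, and it uses the standard decomposition $\Jrd_{n,\gamma}=\gamma I+N$ with $N$ nilpotent, the binomial theorem for commuting matrices, and the explicit description of $N^j$. The paper states Lemma~\ref{l.powers} without proof, treating it as a routine calculation, so there is nothing in the paper to compare against; your argument is exactly the expected one and fills that gap cleanly, including the remark about the convention $\binom{m}{j}=0$ for $m<j$.
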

In the next Lemma we calculate the product of powers of Jordan blocks.
\begin{lemma}
    \label{l.ktuple}Let $n,k\in\mathbb N$ and $m=(m_1,\ldots,m_k)\in\mathbb N_0 ^k$. Let $\gamma_1,\ldots,\gamma_k$ be the eigenvalues that define the Jordan blocks and set $\gamma=(\gamma_1,\ldots,\gamma_k)$. We also set $J=(\Jrd_{n,\gamma_1},\ldots,\Jrd_{n,\gamma_k})$. We then have
    \begin{equation*}
        J^m=\gamma^m
        \begin{pmatrix}
            1 & d_{1} ^{(m)} & d_{2} ^{(m)} & \ldots & d_{n-1} ^{(m)}\\
            0 &1 & d_{1} ^{(m)} & \ddots & \vdots\\
            0 &0 &1& \ddots & d_{2} ^{(m)} \\
            \vdots & \ddots & \ddots & \ddots & d_{1} ^{(m)}\\
            0 & \ldots &0 &0 &1
        \end{pmatrix}
        ,
    \end{equation*}
    where now the diagonals of $J^m$ are defined by the numbers $d^{(m)}_j$:
    \begin{align*}
        d_j ^{(m)}= \sum _{|\beta|=j}\binom{m}{\beta}\frac{1}{\gamma^\beta}= \sum_{\stackrel{\beta_1+\cdots+\beta_k=j}{0\leq\beta_p\leq j,\, p=1,\ldots,k}} \binom{m_1}{\beta_1}\cdots\binom{m_k}{\beta_k} \ \ \frac{1}{{\gamma}^{\beta_1}}\cdots\frac{1}{{\gamma} ^{\beta_k}},\quad j=1,\ldots,n-1.
    \end{align*}
\end{lemma}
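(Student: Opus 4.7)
The plan is to reduce the computation to polynomial multiplication in a single nilpotent matrix. Let $N$ denote the $n\times n$ shift matrix with $1$'s on the superdiagonal and $0$'s elsewhere, so that $N^n = 0$ and $N^j$ has $1$'s on the $j$-th superdiagonal and $0$'s elsewhere for $0 \le j \le n-1$. By Lemma \ref{l.powers}, each factor of $J^m$ can be written as
\[
\Jrd_{n,\gamma_\nu}^{m_\nu} = \gamma_\nu^{m_\nu} \sum_{\beta=0}^{n-1} \binom{m_\nu}{\beta} \gamma_\nu^{-\beta} N^\beta = \gamma_\nu^{m_\nu} P_\nu(N),
\]
where $P_\nu \in \mathbb{R}[x]$ is a polynomial of degree at most $n-1$ read off from the first row of the block in Lemma \ref{l.powers}, and the convention $\binom{m}{\beta}=0$ for $\beta > m$ is in force.

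The key observation is that all the matrices $\Jrd_{n,\gamma_\nu}^{m_\nu}$ are polynomials in the same matrix $N$, hence commute, and their product equals $\gamma^m$ times the product of the polynomials $P_\nu(N)$, computed in $\mathbb{R}[N]$. Expanding this $k$-fold product and using $N^i \cdot N^j = N^{i+j}$ together with the truncation $N^n = 0$, the coefficient of $N^j$ for $0 \le j \le n-1$ is precisely
\[
\sum_{\substack{\beta_1+\cdots+\beta_k = j \\ 0 \le \beta_\nu \le j}} \prod_{\nu=1}^{k} \binom{m_\nu}{\beta_\nu} \gamma_\nu^{-\beta_\nu} = d_j^{(m)},
\]
while the coefficient of $N^0 = I$ is $1$, corresponding to the forced choice $\beta_\nu = 0$ for all $\nu$.

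Finally, identifying $\sum_{j=0}^{n-1} d_j^{(m)} N^j$ with the upper triangular Toeplitz matrix in the statement is immediate from the description of $N^j$ above, which places $d_j^{(m)}$ on the $j$-th superdiagonal. The proof is essentially a routine bookkeeping exercise once this algebraic setup is in place; the truncation $N^n = 0$ does not affect the claim because we only extract coefficients of $N^j$ for $j < n$. I expect the only minor obstacle to be keeping the multi-index notation clean while expanding the $k$-fold product and verifying that the upper bound $\beta_\nu \le j$ imposes no additional restriction beyond $\beta_\nu \ge 0$ and $\beta_1+\cdots+\beta_k = j$.
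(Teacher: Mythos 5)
Your argument is correct. The paper actually states Lemma \ref{l.ktuple} without proof (as it does Lemma \ref{l.powers}), treating it as a routine consequence of the single-block formula; your route — writing $\Jrd_{n,\gamma_\nu}^{m_\nu} = \gamma_\nu^{m_\nu} P_\nu(N)$ with $N$ the nilpotent superdiagonal shift, multiplying the polynomials in $\mathbb{R}[N]/(N^n)$, and reading off the coefficient of $N^j$ — is exactly the standard computation the authors have in mind, and the bookkeeping details you flag (the constraint $\beta_\nu \le j$ being automatic from $\beta_\nu \ge 0$ and $\sum\beta_\nu = j$, and the truncation $N^n=0$ being harmless since only $j\le n-1$ is needed) are all handled correctly.
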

The next step is to express the entries $d_1 ^{(m)},d_2 ^{(m)}$ in the first two diagonals in a simpler form. This will be enough for our purposes here. For $d_1 ^{(m)}$ we readily see that
\begin{align}
    \label{e.d1m} d_1 ^{(m)}=\sum_{\nu=1}^{k} \frac{m_\nu}{\gamma_\nu}.
\end{align}
For $d_2 ^{(m)}$ we have
\begin{align}
    \label{e.d2m} d_2 ^{(m)}=\frac{1}{2}\sum_{\nu=1} ^k \frac{m_\nu(m_\nu-1)}{\gamma_\nu ^2}+\sum_{ {1\leq \nu<\nu'\leq k}} \frac{m_\nu m_{\nu'}}{\gamma_{\nu}\gamma_{\nu'}}=\frac{1}{2}\bigg((d_1 ^{(m)})^2-\sum_{\nu=1} ^k \frac{m_\nu}{\gamma_\nu ^2}\bigg).
\end{align}
%%%%%%%%%%%%%%%%%%%%%%%%%%%%%% SECTION  SECTION SECTION
%%%%%%%%%%%%%%%%%%%%%%%%%%%%%% SECTION  SECTION SECTION
\subsection{Operations on matrices with several Jordan blocks} We now consider the general case where we have a $k$-tuple of $n\times n$ matrices in Jordan form over $\mathbb R$. As we have pointed out, the structure of each matrix should be the same for the operators to be commuting, that is, each matrix in the $k$-tuple consists of say $p$ Jordan blocks with dimensions $n_1,\ldots,n_p$, where $n_1+\cdots+n_p=n$. To fix the notation, let $T=(T_1,\ldots,T_k)$. We have that
\begin{equation}
    T_\nu=\Jrd_{n_1,\gamma_\nu ^{(1)}}\oplus\cdots\oplus \Jrd_{n_p,\gamma_\nu ^{(p)}}, \quad 1\leq \nu\leq k,
\end{equation}
where $\gamma_\nu^{(1)},\ldots,\gamma_\nu ^{(p)}\in\mathbb R$ for all $1\leq \nu \leq k$. In other words, each matrix in the $k$-tuple is a block diagonal Jordan matrix with $p$ discrete real eigenvalues. For $b\in\{1,2,\ldots,p\}$, the block $\Jrd_{n_b,\gamma_\nu^{(b)}}$ is a $n_b\times n_b$ Jordan block with eigenvalue $\gamma_\nu ^{(b)}$, where the index $\nu$ tells us which term of the $k$-tuple we are considering. We may have $n_b=1$ for some $b$'s but we exclude the possibility that $n_b=1$ for all $b\in\{1,2,\ldots,p\}$, that is, we don't allow a $k$-tuple of diagonal matrices. For $m=(m_1,\ldots,m_k)\in\mathbb N_0 ^k$, we can write
\begin{align}
    \label{e.Jordanktuple1} T^m=T_1^{m_1}\cdots T_k^{m_k}= \Pi_1 \oplus\cdots\oplus \Pi_p,
\end{align}
where each $\Pi_b$ is the tuple defined as
\begin{align}
    \label{e.Jordanktuple2} \Pi_b\eqdef \Jrd_{n_b,\gamma_1 ^{(b)}} ^{m_1}\cdots  \Jrd_{n_b,\gamma_k ^{(b)}} ^{m_k}, \quad b\in\{1,2,\ldots,p\}.
\end{align}
Observe that each block $\Pi_b $ is described by Lemma \ref{l.ktuple} with $n=n_b$ and $\gamma=(\gamma_1 ^{(b)},\ldots,\gamma_k ^{(b)})$.
%%%%%%%%%%%%%%%%%%%%%%%%%%%%%% SECTION  SECTION SECTION
%%%%%%%%%%%%%%%%%%%%%%%%%%%%%% SECTION  SECTION SECTION
\subsection{Matrices with Jordan blocks of dimension at most two} \label{ss.<3ktuple}We now turn our attention to $k$-tuples of $n\times n$ matrices in Jordan form over $\mathbb R$ where all the Jordan blocks have dimension $n_b\leq 2$. This is justified because of Proposition \ref{p.noJordanMatrix} which says that if even one of the Jordan blocks has dimension $n_b>2$ then the $k$-tuple cannot be hypercyclic. To simplify the notation let us agree that in each Jordan matrix of the $k$-tuple we have $p_1$ Jordan blocks of dimension $2$ and $p_2$ Jordan blocks of dimension $1$. Assume that in the $\nu$-th term the $2\times2$ blocks are defined by the non-zero eigenvalues $\gamma_\nu ^{(1)},\ldots,\gamma_\nu ^{(p_1)}$ and the $1\times 1$ blocks are defined by the non-zero eigenvalues $c_\nu ^{(1)},\ldots,c_\nu ^{(p_2)}$. We also write $\gamma^{(b)}=(\gamma_1 ^{(b)},\ldots,\gamma_k ^{(b)})$ for $1\leq b\leq p_1$ and $c^{(b)}=(c_1 ^{(b)},\ldots,c_k ^{(b)})$ for $1\leq b \leq p_2$. We define the matrix
$\Gamma=\{\gamma_\nu ^{(b)}\} \in \mathbb R ^{p_1\times k}$ whose rows are the vectors $\gamma ^{(b)}$, $1\leq b \leq p_1$. Similarly, the matrix $C=\{c_\nu ^{(b)}\}\in\mathbb R^{p_2\times k}$ is the matrix whose rows are the vectors $c ^{(b)}$ for $1\leq b \leq p_2$. Of course we have $2p_1+p_2=n$. Finally, the following notation will be useful. For $\Gamma$ and $C$ as before and $m=(m_1,\ldots,m_k)\in\mathbb N_0 ^k$, we consider the vector
in $\mathbb R^n$:
\begin{align*}
    V(m,\Gamma,C)\eqdef \bigg( (\gamma ^{(1)})^{m} ,\sum_{\nu=1} ^k\frac{m_\nu}{\gamma_\nu ^{(1)}} , \ldots, ( \gamma ^{(p_1)})^{m} ,\sum_{\nu=1} ^k\frac{m_\nu}{\gamma_\nu ^{(p_1)}}, (c^{(1)})^m,\ldots,(c^{(p_2)})^m \bigg).
\end{align*}

Let $T=(T_1,\ldots,T_k)$ be a $k$-tuple of $n\times n$ matrices in Jordan form over $\mathbb R$, satisfying the previous assumptions. For $m=(m_1,\ldots,m_k)\in\mathbb N_0 ^k$, $T^m$ will be given by a form similar to \eqref{e.Jordanktuple1}:
\begin{align*}
    T^m=T_1^{m_1}\cdots T_k^{m_k}= P_1 \oplus\cdots\oplus P_{p_1} \oplus {P'}_1  \oplus\ldots\oplus {P'}_{p_2} .
\end{align*}
The tuples $P_b$ are defined as:
\begin{align}
    P_b\eqdef  \Jrd_{2,\gamma_1 ^{(b)}} ^{m_1}\cdots \Jrd_{2,\gamma_k ^{(b)}} ^{m_k}, \quad b\in\{1,2,\ldots,p_1\},
\end{align}
while
\begin{align}
    P'_b\eqdef  \Jrd_{1,c_1 ^{(b)}} ^{m_1}\cdots \Jrd_{1,c_k ^{(b)}} ^{m_k}=(c^{(b)} )^{m}, \quad b\in\{1,2,\ldots,p_2\}.
\end{align}

With these notations and assumptions taken as understood, we use Lemma \ref{l.ktuple} together with the expression \eqref{e.d1m} to get a more handy characterization of hypercyclicity in the special case we are considering.

%%%%%%%%%%%%%%%%%%%%%%%%%%%%%% LEMMA LEMMA LEMMA
%%%%%%%%%%%%%%%%%%%%%%%%%%%%%% LEMMA LEMMA LEMMA
\begin{lemma}
    \label{l.<3hyper} Let $T=(T_1,\ldots,T_k)$ be a $k$-tuple of $n\times n$ matrices in Jordan form over $\mathbb R$, defined by means of the matrices $\Gamma$ and $C$. We assume that all the Jordan blocks in $T$ have dimension at most two. Then $T$ is hypercyclic if and only if the set
    \begin{align*}
        \bigg\{ V(m,\Gamma,C): m=(m_1,\ldots,m_k)\in\mathbb N_0 ^k \bigg\} ,
    \end{align*}
    is dense in $\mathbb R^n$.
\end{lemma}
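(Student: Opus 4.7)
The plan is to construct an explicit change of coordinates that identifies the orbit $\{T^m x:m\in\mathbb N_0^k\}$ (for a suitable initial vector $x$) with the set $\{V(m,\Gamma,C):m\in\mathbb N_0^k\}$ up to restriction to an open dense subset of $\mathbb R^n$, and then transfer density across this identification in both directions.

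First, using the block-diagonal decomposition \eqref{e.Jordanktuple1} together with Lemma \ref{l.ktuple} specialised to $n_b\leq 2$ (so that only the diagonal entry and the off-diagonal number $d_1^{(m,b)}=\sum_\nu m_\nu/\gamma_\nu^{(b)}$ from \eqref{e.d1m} appear), I would write the coordinates of $T^m x$ explicitly: for $x=(x_1^{(1)},x_2^{(1)},\ldots,x_1^{(p_1)},x_2^{(p_1)},y_1,\ldots,y_{p_2})$, each $2$-block contributes the pair $\bigl((\gamma^{(b)})^m(x_1^{(b)}+d_1^{(m,b)}x_2^{(b)}),\,(\gamma^{(b)})^m x_2^{(b)}\bigr)$, and each $1$-block contributes $(c^{(b)})^m y_b$. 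This explicit form is the common starting point for both implications.

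For the direction \emph{$T$ hypercyclic $\Rightarrow$ density of $V(m,\Gamma,C)$}, I would first observe that any hypercyclic vector $x$ must satisfy $x_2^{(b)}\neq 0$ for every $b\leq p_1$ and $y_b\neq 0$ for every $b\leq p_2$; otherwise a projection of the orbit onto that coordinate would be identically zero, contradicting density. Because all $\gamma_\nu^{(b)}$ and $c_\nu^{(b)}$ are nonzero, the entire orbit then lies inside the open dense set $D\subset\mathbb R^n$ on which the $2$-block second coordinates and the $1$-block coordinates are all nonzero. On $D$ I would define the map
\[
\phi_x\colon (u_1^{(b)},u_2^{(b)})_{b},(w_b)_{b}\longmapsto \Bigl(\tfrac{u_2^{(b)}}{x_2^{(b)}},\,\tfrac{u_1^{(b)}x_2^{(b)}-u_2^{(b)}x_1^{(b)}}{u_2^{(b)}x_2^{(b)}}\Bigr)_{b},\Bigl(\tfrac{w_b}{y_b}\Bigr)_{b},
\]
which a direct substitution shows to be a homeomorphism of $D$ onto an open dense set $E\subset\mathbb R^n$ and to satisfy $\phi_x(T^m x)=V(m,\Gamma,C)$ for every $m\in\mathbb N_0^k$. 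Density of $\{T^m x\}$ in $\mathbb R^n$ forces density in $D$; pushing forward by $\phi_x$ yields density of $\{V(m,\Gamma,C)\}$ in $E$, and hence in $\overline E=\mathbb R^n$.

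For the converse direction, I would select the concrete initial vector $x$ with $x_1^{(b)}=0$, $x_2^{(b)}=1$ and $y_b=1$, which turns $\phi_x$ into a genuine homeomorphism whose inverse maps each $V(m,\Gamma,C)$ back to $T^m x$. Running the same density transfer argument in the opposite direction, using that $\{V(m,\Gamma,C)\}$ already lies in $E$, produces density of $\{T^m x\}$ in $\mathbb R^n$, so this explicit $x$ is hypercyclic for $T$. The only point requiring care is the verification that $\phi_x$ is a bona fide homeomorphism between the specified open dense subsets and that density on such a subset is equivalent to density in the ambient $\mathbb R^n$; the rest of the argument is bookkeeping of the formulas supplied by Lemma \ref{l.ktuple} and \eqref{e.d1m}.
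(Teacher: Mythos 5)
Your proposal is correct and takes essentially the same approach as the paper: both establish that any hypercyclic vector must have nonzero ``second'' entries on the $2$-blocks and nonzero entries on the $1$-blocks, both exhibit the same explicit rational change of coordinates carrying $T^m x$ to $V(m,\Gamma,C)$ (and use the same initial vector $x_1^{(b)}=0,\,x_2^{(b)}=1,\,y_b=1$ in the converse direction), and both transfer density across it. The only difference is presentational: the paper carries out the change of variables coordinate by coordinate via convergent sequences for each target point, whereas you package it once as the homeomorphism $\phi_x$ between the open dense sets $D$ and $E$ and then invoke the density-transfer argument, which is a slightly cleaner formulation of the identical computation.
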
	
%%%%%%%%%%%%%%%%%%%%%%%%%%%%%% LEMMA LEMMA LEMMA
%%%%%%%%%%%%%%%%%%%%%%%%%%%%%% LEMMA LEMMA LEMMA

%%%%%%%%%%%%%%%%%%%%%%%%%%%%%% PROOF PROOF PROOF
%%%%%%%%%%%%%%%%%%%%%%%%%%%%%% PROOF PROOF PROOF
\begin{proof} Let $T=(T_1,\ldots,T_k)$ be a $k$-tuple of $n\times n$ matrices over $\mathbb R$ where each matrix in the tuple has $p_1$ Jordan blocks of dimension $2$ and $p_2$ Jordan blocks of dimension $1$. For any $m\in\mathbb N_0 ^k$ and $y\in\mathbb R^n$ a straightforward calculation using Lemma \ref{l.ktuple} yields
\begin{align}\label{e.reduced}
	(T^m y)^T=\begin{pmatrix}
	(\gamma^{(1)})^m\big(y_1+\sum_{\nu=1} ^k\frac{m_\nu}{\gamma_\nu ^{(1)}} y_2\big )\\
	(\gamma^{(1)})^m y_2 \\
	\vdots\\
	(\gamma^{(p_1)})^m\big(y_{2p_1-1}+\sum_{\nu=1} ^k\frac{m_\nu}{\gamma_\nu ^{(p_1)}} y_{2p_1}\big )\\
	(\gamma^{(p_1)})^m y_{2p_1}\\
	(c^{(1)})^m y_{2p_1+1}\\
	\vdots\\
	(c^{(p_1)})^m y_{2p_1+p_2}
    \end{pmatrix}.
\end{align}

Assume now that $T$ is a hypercyclic tuple. There exists $y=(y_1,\ldots,y_n)\in\mathbb R^n$ such that
\begin{align}\label{e.dense}
	\overline{\{ T^m y:m\in\mathbb N_0 ^k\} }=\mathbb R^n.
\end{align}
By \eqref{e.reduced} and \eqref{e.dense} we conclude that $y_j\neq 0$ for all $j\in\{2,4,6,\ldots,2p_1,2p_1+1,2p_1+2,\ldots,2p_1+p_2\}$. Now let $x\in\mathbb R^n$ be a vector with all of its entries different than zero. We define the vector $z=(z_1,\ldots,z_n)\in\mathbb R^n$ by defining its coordinates:
\begin{align*}
 z_{2b-1}& \eqdef x_{2b}y_{2b-1}+x_{2b-1}x_{2b}y_{2b},\quad\mbox{if}\quad b\in\{1,2,\ldots,p_1\},\\
 z_{2b}&\eqdef x_{2b}y_{2b},\quad\mbox{if}\quad b\in\{1,2,\ldots,p_1\},\\
 z_{p_1+b} &\eqdef x_{p_1+b} y_{p_1+b},\quad\mbox{if}\quad b\in\{1,2,\ldots,p_2\}.				
\end{align*}
Since the $k$-tuple $T$ is hypercyclic, there exists a sequence $\{m^{(\tau)}\}_{\tau \in\mathbb N}\subset \mathbb N_0 ^k$ such that $T^{m^{(\tau)}}y\to z$ in $\mathbb R^n$ as $\tau \to +\infty$. By the definition of the vector $z$ and \eqref{e.reduced} we get
\begin{align*}
\lim_{\tau\to+\infty}(\gamma^{(b)})^{m^{(\tau)}}= x_{2b}\quad\mbox{for all}\quad b=1,2,\ldots,p_1,\\
\lim_{\tau\to+\infty}(c^{(b)})^{m^{(\tau)}}= x_{2p_1+b}\quad\mbox{for all}\quad b=1,2,\ldots,p_2,
\end{align*}
and
\begin{align*}
\lim_{\tau\to+\infty} (\gamma^{(b)})^{m^{(\tau)}}(y_{2b-1}+\sum_{\nu=1} ^k\frac{m_n ^{(\tau)}}{\gamma_\nu ^{(b)}}y_{2b})=x_{2b}y_{2b-1}+x_{2b-1}x_{2b}y_{2b},\quad\mbox{for all}\quad b=1,2,\ldots,p_1.	
\end{align*}
Combining the previous convergence relations we conclude that
$$\lim_{\tau\to+\infty} \sum_{\nu=1} ^k \frac{m_n ^{(\tau)} } {\gamma_\nu ^{(b)} } = x_{2b-1}\quad\mbox{for all}\quad b=1,2,\ldots,p_1.$$
Observe that in order to conclude the previous results we had to divide by entries of $x$ or $y$ but this is justified since we have made sure that these entries are non-zero.

We have showed that if $T$ is hypercyclic then for every $x\in \mathbb R^n$ with all of its entries different than zero there exists a sequence $\{m^{(\tau)}\}_{\tau \in\mathbb N}\subset \mathbb N_0 ^k$ such that
$$V(m^{(\tau)},\Gamma,C)\to x \quad\mbox{as}\quad \tau\to+\infty.$$
Since the set 
$$\{x=(x_1,x_2,\ldots,x_n)\in\mathbb R^n:x_j\neq 0\quad\mbox{for all}\quad j=1,2,\ldots n\}$$
is dense in $\mathbb R^n$, this concludes one direction of the equivalence in the lemma. 

The opposite directions is very easy. Choose $w\in \mathbb R^n$ with $w_1=w_3=\cdots=w_{2p_1-1}=0$ and $w_2=w_4=\cdots=w_{2p_2}=w_{2p_2+1}=\cdots=w_n=1$. If $x\in\mathbb R^n$ has all of its entries different than zero we chose $\{m^{(\tau)}\}_{\tau \in\mathbb N}\subset \mathbb N_0 ^k$ such that
\begin{align*}
	(\gamma^{(b)})^{m^{(\tau)}}&\to x_{2b}\quad \mbox{as}\quad\tau\to+\infty\quad\mbox{for all}\quad b=1,2,\ldots,p_1,\\
	\sum_{\nu=1} ^k\frac{m_n ^{(\tau)}}{\gamma_\nu ^{(b)}}&\to x_{2b-1}/x_{2b}\quad \mbox{as}\quad\tau\to+\infty\quad\mbox{for all}\quad b=1,2,\ldots,p_1,\\
	(c^{(b)})^{m^{(\tau)}}&\to x_b\quad\mbox{as}\quad\tau\to+\infty\quad\mbox{for all}\quad b=1,2,\ldots,p_2.
\end{align*}
This is always possible by our hypothesis. Using \eqref{e.reduced} it is easy to see that $T^{m^{(\tau)}}w\to x$ as $\tau\to+\infty$. It readily follows that $w$ is a hypercyclic vector for $T$.
\end{proof}
%%%%%%%%%%%%%%%%%%%%%%%%%%%%%% PROOF PROOF PROOF
%%%%%%%%%%%%%%%%%%%%%%%%%%%%%% PROOF PROOF PROOF
A slight variant helps us write this in linear form in terms of $m\in\mathbb N_0 ^k$.

%%%%%%%%%%%%%%%%%%%%%%%%%%%%%% COROLLARY COROLLARY COROLLARY
%%%%%%%%%%%%%%%%%%%%%%%%%%%%%% COROLLARY COROLLARY COROLLARY
\begin{corollary}
    \label{c.<3hyper} Let $T=(T_1,\ldots,T_k)$ be a $k$-tuple of $n\times n$ matrices in Jordan form over $\mathbb R$ where all the Jordan blocks have dimension at most two. We define the $n\times k$ matrix
    \begin{align}
        \label{e.Lmatrix} L=
        \begin{pmatrix}
            \log|\gamma_1 ^{(1)}| & \log|\gamma_2 ^{(1)}| & \cdots &\log|\gamma_k^{(1)}| \\
            {1}/{\gamma_1 ^{(1)}} & {1}/{\gamma_2 ^{(1)}} & \cdots & {1}/{\gamma_k ^{(1)}} \\
            \vdots & \vdots & \cdots &\vdots \\
            \log|\gamma_1 ^{(p_1)}| & \log|\gamma_2 ^{(p_1)}| & \cdots &\log|\gamma_k ^{(p_1)}| \\
            {1}/{\gamma_1 ^{(p_1)}} & {1}/{\gamma_2 ^{(p_1)}} & \cdots & {1}/{\gamma_k ^{(p_1)}} \\
            \log|c_1 ^{(1)}| & \log|c_2 ^{(1)}| & \cdots &\log|c_k ^{(1)}| \\
            \vdots &\vdots &\cdots &\vdots\\
            \log|c_1 ^{(p_2)}| & \log|c_2 ^{(p_2)}| & \cdots &\log|c_k ^{(p_2)}|
        \end{pmatrix}
        .
    \end{align}
    If $T$ is hypercyclic then the set $\{Lm^T:m\in\mathbb N_0 ^k\}$ is dense in $\mathbb R^n$.
\end{corollary}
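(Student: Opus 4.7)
The plan is to deduce this directly from Lemma~\ref{l.<3hyper} by applying $\log|\cdot|$ coordinatewise to the multiplicative entries of $V(m,\Gamma,C)$. The key observation is the identity
\[
\log\bigl|(\gamma^{(b)})^m\bigr| = \sum_{\nu=1}^k m_\nu \log|\gamma_\nu^{(b)}|,\qquad \log\bigl|(c^{(b)})^m\bigr| = \sum_{\nu=1}^k m_\nu \log|c_\nu^{(b)}|,
\]
which, combined with \eqref{e.d1m} for the entry $\sum_{\nu=1}^k m_\nu/\gamma_\nu^{(b)}$, shows that $Lm^T$ is precisely the image of $V(m,\Gamma,C)$ under the coordinatewise map $\phi$ that applies $\log|\cdot|$ to the multiplicative slots (indices $2b-1$ for $b=1,\dots,p_1$ and $2p_1+b$ for $b=1,\dots,p_2$) and the identity to the additive slots (indices $2b$ for $b=1,\dots,p_1$), defined wherever the multiplicative coordinates are nonzero.

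Given a target $y\in\mathbb R^n$ and $\epsilon>0$, I would build a preimage target $z\in\mathbb R^n$ by exponentiating in the multiplicative slots: set $z_{2b-1}\coloneqq e^{y_{2b-1}}$ for $b=1,\dots,p_1$, $z_{2p_1+b}\coloneqq e^{y_{2p_1+b}}$ for $b=1,\dots,p_2$, and $z_{2b}\coloneqq y_{2b}$ for $b=1,\dots,p_1$. The crucial point is that every multiplicative coordinate of $z$ is strictly positive, so by continuity of $\log|\cdot|$ at positive reals I can choose $\delta>0$ small enough that $|w-z|<\delta$ forces each multiplicative coordinate of $w$ to remain bounded away from zero and $|\phi(w)-y|<\epsilon$; on the additive slots, where $\phi$ is the identity, this is automatic.

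To finish, Lemma~\ref{l.<3hyper} applied to the hypercyclic tuple $T$ gives the density of $\{V(m,\Gamma,C):m\in\mathbb N_0^k\}$ in $\mathbb R^n$, so there exists $m\in\mathbb N_0^k$ with $|V(m,\Gamma,C)-z|<\delta$, and then $|Lm^T-y|=|\phi(V(m,\Gamma,C))-y|<\epsilon$. Since $y\in\mathbb R^n$ and $\epsilon>0$ were arbitrary, $\{Lm^T:m\in\mathbb N_0^k\}$ is dense in $\mathbb R^n$. I do not anticipate a genuine obstacle here: the only subtlety is the deliberate choice of positive target values in the multiplicative slots, which guarantees that $\log|\cdot|$ is continuous at the approximating points.
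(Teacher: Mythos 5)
Your proposal is correct and follows essentially the same approach as the paper: both exponentiate the target in the multiplicative coordinates to land in the positive cone, then invoke Lemma~\ref{l.<3hyper} and the continuity of $\log|\cdot|$ on $\mathbb{R}\setminus\{0\}$ to conclude. The paper phrases this by introducing the auxiliary vector $V^+(m,\Gamma,C)$ and noting its density in $(\mathbb{R}^+\times\mathbb{R})^{p_1}\times(\mathbb{R}^+)^{p_2}$, whereas you directly package the same step as a coordinatewise map $\phi$; the two formulations are interchangeable.
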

%%%%%%%%%%%%%%%%%%%%%%%%%%%%%% COROLLARY COROLLARY COROLLARY
%%%%%%%%%%%%%%%%%%%%%%%%%%%%%% COROLLARY COROLLARY COROLLARY
Here $m^T$ denotes the transpose of the vector $m=(m_1,\ldots,m_k)$.
%%%%%%%%%%%%%%%%%%%%%%%%%%%%%% PROOF PROOF PROOF
%%%%%%%%%%%%%%%%%%%%%%%%%%%%%% PROOF PROOF PROOF
\begin{proof} Let us denote by $V^+(m,\Gamma,C)$ the vector
	\begin{align*}
	    V^+(m,\Gamma,C)\eqdef \bigg( |(\gamma ^{(1)})^{m} |,\sum_{\nu=1} ^k\frac{m_\nu}{\gamma_\nu ^{(1)}} , \ldots, |( \gamma ^{(p_1)})^{m}| ,\sum_{\nu=1} ^k\frac{m_\nu}{\gamma_\nu ^{(p_1)}}, |(c^{(1)})^m|,\ldots,|(c^{(p_2)})^m| \bigg).
	\end{align*}
Since $T$ is hypercyclic, Lemma \ref{l.<3hyper} implies that the set     \begin{align*}
        \bigg\{ V(m,\Gamma,C): m=(m_1,\ldots,m_k)\in\mathbb N_0 ^k \bigg\}
    \end{align*}
is dense in $\mathbb R^n$ and, thus, that the set
\begin{align*}
    \bigg\{ V^+(m,\Gamma,C): m=(m_1,\ldots,m_k)\in\mathbb N_0 ^k \bigg\}
\end{align*}
is dense in $(\mathbb R^+\times \mathbb R)^{p_1}\times (\mathbb R^+)^{p_2}$. Now let $x\in\mathbb R^n $ and define the vector
$$y\eqdef(e^{x_1},x_2,e^{x_3},x_4,\ldots,e^{x_{2p_1-1}},x_{2p_1},e^{x_{2p_1+1}},e^{x_{2p_1+2}}\ldots,e^{x_{2p_1+p_2}}).$$
Since $y\in (\mathbb R^+\times \mathbb R)^{p_1}\times (\mathbb R^+)^{p_2}$ there exists a sequence $\{m^{(\tau)}\}_{\tau \in\mathbb N}\subset \mathbb N_0 ^k$ such that
$$V^+(m^{(\tau)},\Gamma,C)\to y\quad\mbox{as}\quad \tau\to+\infty.$$
This convergence is equivalent to
\begin{align*}
    |\gamma_1 ^{(b)}|^{m_1 ^{(\tau)}}\cdots |\gamma_k ^{(b)}|^{m_k ^{(\tau)}}& \to e^{x_{2b-1}}\quad\mbox{as}\quad \tau\to+\infty\quad\mbox{for all}\quad b=1,2,\ldots,p_1,\\	
	\frac{m_1 ^{(\tau)}}{\gamma_1 ^{(b)}}+\cdots+\frac{m_k ^{(\tau)}}{\gamma_k ^{(b)}} &\to x_{2b}\quad\mbox{as}\quad \tau\to+\infty\quad\mbox{for all}\quad b=1,2,\ldots,p_1,\\
	|c_1 ^{(b)}|^{m_1 ^{(\tau)}}\cdots |c_k ^{(b)}|^{m_k ^{(\tau)}} &\to e^{x_{2p_1+b}}\quad\mbox{as}\quad \tau\to+\infty\quad\mbox{for all}\quad b=1,2,\ldots,p_2.
\end{align*}
Taking logarithms, the previous three convergence relations are equivalent to

\begin{align}\label{e.conv}
  m_1 ^{(\tau)} \log|\gamma_1 ^{(b)}|+\cdots+m_k ^{(\tau)}\log |\gamma_k ^{(b)}|&\to x_{2b-1}\quad\mbox{as}\quad \tau \to+\infty\quad\mbox{for all}\quad b=1,2,\ldots,p_1,\notag\\	
	\frac{m_1 ^{(\tau)}}{\gamma_1 ^{(b)}}+\cdots+\frac{m_k ^{(\tau)}}{\gamma_k ^{(b)}} &\to x_{2b}\quad\mbox{as}\quad \tau\to+\infty\quad\mbox{for all}\quad b=1,2,\ldots,p_1,\\
	m_1 ^{(\tau)} \log|c_1 ^{(b)}|+\cdots+m_k ^{(\tau)}\log |c_k ^{(b)}| &\to x_{2p_1+ b}\quad\mbox{as}\quad \tau \to+\infty\quad\mbox{for all}\quad b=1,2,\ldots,p_2.\notag
\end{align}

Since $x\in\mathbb R^n$ was arbitrary, gathering the convergence relations \eqref{e.conv} in matrix form gives $L(m^{(\tau)})^T\to x$ as $\tau\to +\infty$. Since $x\in\mathbb R^n$ was arbitrary this concludes the proof of the lemma.
\end{proof}
%%%%%%%%%%%%%%%%%%%%%%%%%%%%%% PROOF PROOF PROOF
%%%%%%%%%%%%%%%%%%%%%%%%%%%%%% PROOF PROOF PROOF
%%%%%%%%%%%%%%%%%%%%%%%%%%%%%% SECTION  SECTION SECTION
%%%%%%%%%%%%%%%%%%%%%%%%%%%%%% SECTION  SECTION SECTION
\section{Negative Results}\label{s.negative} In this section we prove Propositions \ref{p.noJordanBlock} and \ref{p.noJordanMatrix}
\begin{proof}
    [Proof of Proposition \ref{p.noJordanBlock}] Let $n,k\in\mathbb N$ and $\gamma=(\gamma_1,\ldots,\gamma_k)\in\mathbb R^k$ be the eigenvalues defining a $k$-tuple of Jordan blocks $J=(J_{n,\gamma_1},\ldots,J_{n,\gamma_k})$. Now suppose $J$ is hypercyclic, that is, there exists a $x\in\mathbb R^n$ such that the set $\{J^m x:m\in\mathbb N_0 ^k\}$ is dense in $\mathbb R^n$.

    Let $J_3(m) $ be the $3\times 3$ submatrix of $J^m$ that arises from $J^m$ by deleting the first $n-3$ rows and the first $n-3$ columns, that is
    \begin{align}
        J_3(m)=\gamma^{m}
        \begin{pmatrix}
            1 & d_1 ^{(m)} & d_2 ^{(m)} \\
            0 & 1 & d_1 ^{(m)} \\
            0 & 0 & 1
        \end{pmatrix}
        .
    \end{align}
Since $\{J^m x:m\in\mathbb N_0 ^k\}$ is dense in $\mathbb R^n$ there exists a $y=(y_1,y_2,y_3)\in\mathbb R^3$ such that the set $\{J_3(m) y:m\in\mathbb N_0 ^k\}$ is dense in $\mathbb R^3$. In particular the set $\{\gamma^m y_3:m\in\mathbb N_0 ^k\}$ is dense in $\mathbb R$ so we must have $y_3\neq 0$. Now we let $w=(y_1+y_2+y_3,y_2+y_3,y_3)$ and we choose a sequence $m=m^{(\tau)}=(m_1 ^{(\tau)},m_2 ^{(\tau)},m_3^{(\tau)})$ such that $J_3 ^{m^{(\tau)}} y\rightarrow w$ as $\tau\rightarrow \infty$. We will suppress $\tau$ to simplify notation. Since $y_3\neq0$ we conclude that $\gamma^{m}\rightarrow 1$ as $\tau\rightarrow \infty$. Next we have that $\gamma^{m}(y_2+y_3d_1^{(m)})\rightarrow y_2+y_3$ as $\tau\rightarrow \infty$. We conclude that $ d_1^{(m)} \rightarrow 1 $ as $\tau\rightarrow \infty$. Finally, from the first row of $J_3 ^m$ we get that $\gamma^m(y_1+d_1^{(m)}y_2+d_2 ^{(m)}y_3)\rightarrow y_1+y_2+y_3 $ as $\tau\rightarrow \infty$. Recalling the formula for $d_2 ^{(m)}$ in equation \eqref{e.d2m} we can rewrite this as
    \begin{align}
        \label{e.3lim} \gamma^m\big(y_1+d_1^{(m)}y_2+\frac{1}{2}((d_1 ^{(m)})^2-\sum_{j=1} ^k \frac{m_j}{\gamma_j ^2})y_3\big)\rightarrow y_1+y_2+y_3 \ \mbox{as} \ \tau\rightarrow\infty.
    \end{align}
    Let us write $\ell=\lim_{\tau\rightarrow \infty} \sum_{j=1} ^k\frac{m_j}{\gamma_j ^2}$ which obviously exists. From \eqref{e.3lim} we then get that
    \begin{align*}
        y_1+y_2+\frac{1}{2} y_3 -\frac{1}{2}\ell y_3= y_1+y_2+y_3.
    \end{align*}
    But this means that $\ell=-1$ which is clearly impossible since $\sum_{j=1} ^k\frac{m_j}{\gamma_j ^2}\geq 0$ for all $m\in\mathbb N_0 ^k$.
\end{proof}
We now give the proof of the more general result for $k$-tuples of $n\times n$ matrices in Jordan form over $\mathbb R$.
\begin{proof}
    [Proof of Proposition \ref{p.noJordanMatrix}] Let us assume that a $k$-tuple $T$ of matrices in Jordan form over $\mathbb{R}$ is hypercyclic. For $m\in\mathbb N_0 ^k$ the matrix $T^m$ has the form given by \eqref{e.Jordanktuple1} and \eqref{e.Jordanktuple2}. For (i) let $\Pi_b$ be the block that has dimension $n=n_b\geq 3$. Let this block be defined by the real numbers $\gamma_1 ^{(b)},\ldots,\gamma_k ^{(b)}$. Fixing this $b$, we just write $\gamma=(\gamma_1,\ldots,\gamma_k)$. Equation \eqref{e.Jordanktuple2} shows that $\Pi_b$ will be of the form
    \begin{align}
        \Pi_b=\Jrd_{n_b,\gamma_1} ^{m_1}\cdots \Jrd_{n_b,\gamma_k} ^{m_k}.
    \end{align}
    Sine $T$ is hypercyclic and $T^m$ is a block diagonal matrix, we conclude that there exists a $y\in\mathbb R^{n_b}$ such that $\{\Pi_b y:m\in \mathbb N_0 ^k\}$ is dense in $\mathbb R^{n_b}$. Since $n_b\geq 3$ this contradicts Proposition \ref{p.noJordanBlock} so we are done.

    For part (ii) of the Proposition we consider $n$-tuples of $n\times n$ matrices in Jordan form, $T=(T_1,\ldots,T_n)$, where each one of the matrices $T_\nu$ consists of $p$ Jordan blocks of dimension $n_b\leq 2$ for all $b\in\{ 1,2,\ldots,p \}$. We adopt the notations from paragraph \ref{ss.<3ktuple}. Using Corollary \ref{c.<3hyper} we see that if $T$ is hypercyclic then $\overline{\{Lm^T:m=(m_1,\ldots,m_n)\in\mathbb N_0 ^n\}}=\mathbb R^n.$ But this means that the operator $L:\mathbb R^n\rightarrow \mathbb R^n$ has dense range and therefore is onto.  We conclude that $L$ is invertible so we must have $\overline{\mathbb N_0 ^n}=\mathbb R^n$, a contradiction.
\end{proof}

\begin{remark} In part (i) of Proposition \ref{p.noJordanMatrix} we show that if at least one of the Jordan blocks in the tuple has dimension $n_b\geq 3$ then no $k$-tuple is hypercyclic. However, the proof given above works equally well to give a stronger statement, namely that the tuple $T$ is not even \emph{somewhere dense}: for every $x\in\mathbb R^n$, the closure of the set $\{T^mx:m\in\mathbb N_0 ^k\}$ \emph{does not} contain any open balls.
\end{remark}
\begin{remark} Likewise, the proof of part (ii) of Proposition \ref{p.noJordanMatrix} gives the stronger statement that an $n$-tuple of $n\times n$ matrices in Jordan form over $\mathbb R$ is never somewhere dense. Indeed, if the orbit of the $n$-tuple $T$ is somewhere dense for some $x$ in $\mathbb R^n$ then there is a ball $B$ inside the set $L(\mathbb R^n)$. Then the set $L(\mathbb R^n)$, which is a linear subspace of $\mathbb R^n$, has necessarily dimension $n$. We conclude that $L(\mathbb R^n)=\mathbb R^n$ so that the matrix $L$ is invertible and then we proceed as in the proof above. 
\end{remark}
%%%%%%%%%%%%%%%%%%%%%%%%%%%%%% SECTION  SECTION SECTION
%%%%%%%%%%%%%%%%%%%%%%%%%%%%%% SECTION  SECTION SECTION
\section{Hypercyclic tuples of matrices in Jordan form}\label{s.main} In this section we give the proof of Theorem \ref{t.njordan}. For this we need to construct $(n+1)$-tuples of $n\times n$ matrices in Jordan form over $\mathbb R$ which are hypercyclic. For technical reasons we need to consider the two-dimensional case separately than the $n$-dimensional case for $n\geq 3$. We first give the proof in $\mathbb R^n$ for $n\geq 3$ which already contains all the ideas.

\subsection{The proof in the case $n\geq 3$} % (fold)
\label{sub.n>2} We recall that each matrix in the tuple we want to construct will consist of $p_1$ Jordan blocks of dimension $2$ and $p_2$ blocks of dimension $1$. Thus we necessarily have $2p_1+p_2=n$. Since $n\geq 3$ in the case we considering and $p_1,p_2\in\mathbb N_0$ we conclude that $p_1+p_2\geq 2$. We consider the vectors
\begin{align}\label{e.gamma}
\notag  \gamma ^{(1)} & \eqdef \begin{pmatrix} -a_1 ^{(1)}, & a_2 ^{(1)} ,& a_3 ^{(1)}, & \ldots,&a_{p_1-1} ^{(1)},& a_{p_1} ^{(1)},& a_{p_1+1} ^{(1)}, & \ldots ,&  a_{n}^{(1)},&-a_{n+1} ^{(1)} \end{pmatrix} ,\\
\notag  \gamma ^{(2)} &\eqdef\begin{pmatrix} a_1 ^{(2)},&-a_2 ^{(2)},&a_3 ^{(2)},& \ldots,& a_{p_1-1} ^{(2)},&a_{p_1} ^{(2)},&a_{p_1+1} ^{(2)},&\ldots,a_{n} ^{(2)},&-a_{n+1} ^{(2)} \end{pmatrix},\\ & \vdots \\
    \gamma ^{(p_1-1)}& \eqdef \begin{pmatrix} a_1 ^{(p_1-1)},&a_2 ^{(p_1-1)},& a_3 ^{(p_1-1)},&\ldots,-a_{p_1-1} ^{(p_1-1)},&a_{p_1}^{(p_1-1)},&a_{p_1+1} ^{(p_1-1)},&\ldots,& a_{n} ^{(p_1-1)},&-a_{n+1} ^{(p_1-1)}\end{pmatrix} ,\\
\notag  \gamma ^{(p_1)} & \eqdef \begin{pmatrix} a_1 ^{(p_1)},& a_2 ^{(p_1)}, & a_3 ^{(p_1)},&\ldots ,& a_{p_1-1} ^{(p_1)},& -a_{p_1}^{(p_1)},& a_{p_1+1} ^{(p_1)},& \ldots,& a_{n} ^{(p_1)},& -a_{n+1} ^{(p_1)}\end{pmatrix} ,
    \end{align}
where $a_\nu ^{(b)}>0$ for all $1\leq \nu \leq {n+1}$ and $1\leq b\leq p_1$. Similarly let us define
\begin{align}\label{e.c}
\notag c ^{(1)} &\eqdef\begin{pmatrix}-\delta_1 ^{(1)},&\delta_2 ^{(1)},&\delta_3 ^{(1)},&\ldots,&\delta_{p_2-1} ^{(1)},& \delta_{p_2} ^{(1)},&\delta_{p_2+1} ^{(1)},&\ldots,&  \delta_{n+1} ^{(1)}\end{pmatrix} ,\\
\notag c ^{(2)} &\eqdef\begin{pmatrix} \delta_1 ^{(2)},&-\delta_2 ^{(2)},&\delta_3 ^{(2)},&\ldots,&\delta_{p_2-1} ^{(2)},&\delta_{p_2} ^{(2)},&\delta_{p_2+1} ^{(2)},&\ldots,&\delta_{n+1} ^{(2)}\end{pmatrix} ,\\
&\vdots\\
\notag c ^{(p_2-1)} &\eqdef\begin{pmatrix} \delta_1 ^{(p_2-1)},&\delta_2 ^{(p_2-1)},&\delta_3 ^{(p_2-1)},&\ldots,&-\delta_{p_2-1} ^{(p_2-1)},&\delta_{p_2} ^{(p_2-1)},&\delta_{p_2+1} ^{(p_2-1)},&\ldots,&\delta_{n+1} ^{(p_2-1)} \end{pmatrix},\\
\notag c ^{(p_2)} &\eqdef \begin{pmatrix} \delta_1 ^{(p_2)},& \delta_2 ^{(p_2)},&\delta_3 ^{(p_2)},&\ldots,&	\delta_{p_2-1} ^{(p_2)},&-\delta_{p_2} ^{(p_2)},&\delta_{p_2+1} ^{(p_2)},&\ldots,&\delta_{n+1} ^{(p_2)} \end{pmatrix},
\end{align}
where $\delta_\nu ^{(b)}>0$ for all $1\leq \nu \leq {n+1}$ and $1\leq b \leq p_2$. 

Now we define the $(n+1)$-tuple $T=(T_1,\ldots,T_{n+1})$ by setting
\begin{align}\label{e.Thyper}
	T_\nu\eqdef \Jrd_{2,\gamma_\nu ^{(1)}}\oplus\cdots\oplus \Jrd_{2,\gamma_\nu ^{(p_1)}}\oplus\Jrd_{1,c_\nu ^{(1)}}\cdots\oplus \Jrd_{1,c_\nu ^{(p_2)}},\quad 1\leq \nu\leq n+1.
\end{align}
Recall that $\Gamma=\{\gamma_\nu ^{(b)}\}$ and $C=\{c_\nu ^{(b)}\}$. The rest of this section is devoted to defining the matrices $\Gamma$ and $C$ appropriately so that the resulting tuple $T$ defined by \eqref{e.Thyper} is hypercyclic. We will henceforth just write $T$ with the understanding that whenever $\Gamma$ and $C$ are given matrices, $T$ is defined by \eqref{e.Thyper}.

According to Lemma \ref{l.<3hyper}, the $(n+1)$-tuple $T$ is hypercyclic if and only if we have that $\overline{\{V(m,\Gamma,C):m\in\mathbb N_0 ^{n+1}\}}=\mathbb R^n$. The following Lemma will help us simplify this statement:
%%%%%%%%%%%%%%%%%%%%%%%%%%%%%% LEMMA LEMMA LEMMA
%%%%%%%%%%%%%%%%%%%%%%%%%%%%%% LEMMA LEMMA LEMMA
\begin{lemma}
    \label{l.positive} Let the matrices $\Gamma$ and $C$ be defined by \eqref{e.gamma} and \eqref{e.c} respectively. Suppose that the set $\{V(2m,\Gamma, C:m \in\mathbb N_0 ^{n+1}\}$ is dense in $(\mathbb R^+ \times\mathbb R)^{p_1}\times \mathbb (\mathbb R^+)^{p_2}$, where  $2m\eqdef (2m_1,\ldots,2m_{n+1})$. Then $\{V(m,\Gamma, C) :m\in\mathbb N_0 ^{n+1}\}$ is dense in $\mathbb R^{2p_1+p_2}=\mathbb R^n$.
\end{lemma}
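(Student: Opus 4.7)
The plan is to reduce density in $\mathbb R^n$ to the hypothesized density in the ``positive cone'' $(\mathbb R^+\times\mathbb R)^{p_1}\times(\mathbb R^+)^{p_2}$ by a parity decomposition of the multi-index $m$. Fix a target $x\in\mathbb R^n$. Since the set of vectors whose ``multiplicative'' coordinates $x_{2b-1}$ (for $b=1,\ldots,p_1$) and $x_{2p_1+b}$ (for $b=1,\ldots,p_2$) are all nonzero is dense in $\mathbb R^n$, we may assume $x$ has this property.

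The key observation is that writing $m_\nu=2m'_\nu+\epsilon_\nu$ with $m'\in\mathbb N_0^{n+1}$ and $\epsilon\in\{0,1\}^{n+1}$, the vector $V(m,\Gamma,C)$ is the image of $V(2m',\Gamma,C)$ under a fixed transformation depending only on $\epsilon$:
\begin{align*}
(\gamma^{(b)})^m&=(\gamma^{(b)})^\epsilon\,(\gamma^{(b)})^{2m'},\\
(c^{(b)})^m&=(c^{(b)})^\epsilon\,(c^{(b)})^{2m'},\\
\sum_{\nu=1}^{n+1}\frac{m_\nu}{\gamma_\nu^{(b)}}&=\sum_{\nu=1}^{n+1}\frac{2m'_\nu}{\gamma_\nu^{(b)}}+\sum_{\nu=1}^{n+1}\frac{\epsilon_\nu}{\gamma_\nu^{(b)}}.
\end{align*}
The factors $(\gamma^{(b)})^\epsilon=\prod_\nu(\gamma_\nu^{(b)})^{\epsilon_\nu}$ and $(c^{(b)})^\epsilon=\prod_\nu(c_\nu^{(b)})^{\epsilon_\nu}$ are nonzero real constants whose signs, by the structure of $\Gamma$ and $C$ from \eqref{e.gamma}--\eqref{e.c}, equal $(-1)^{\epsilon_b+\epsilon_{n+1}}$ and $(-1)^{\epsilon_b}$ respectively.

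I would then select $\epsilon$ so that these signs match the signs of the corresponding multiplicative coordinates of $x$. Defining the rescaled, shifted target $y\in\mathbb R^n$ coordinatewise by
\begin{align*}
y_{2b-1}=\frac{x_{2b-1}}{(\gamma^{(b)})^\epsilon},\qquad y_{2b}=x_{2b}-\sum_{\nu=1}^{n+1}\frac{\epsilon_\nu}{\gamma_\nu^{(b)}},\qquad y_{2p_1+b}=\frac{x_{2p_1+b}}{(c^{(b)})^\epsilon},
\end{align*}
the sign-matching guarantees $y\in(\mathbb R^+\times\mathbb R)^{p_1}\times(\mathbb R^+)^{p_2}$. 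The hypothesis then yields a sequence $\{m'^{(\tau)}\}\subset\mathbb N_0^{n+1}$ with $V(2m'^{(\tau)},\Gamma,C)\to y$, and applying the (continuous) $\epsilon$-transformation coordinatewise produces $V(2m'^{(\tau)}+\epsilon,\Gamma,C)\to x$, which is the required approximation.

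The main technical point I expect to verify carefully is the sign-matching claim: for every prescribed sign pattern on the multiplicative coordinates of a target in $\mathbb R^n$, there exists $\epsilon\in\{0,1\}^{n+1}$ producing the matching signs in $(\gamma^{(b)})^\epsilon$ and $(c^{(b)})^\epsilon$. This reduces to a finite combinatorial check involving only the parities $\epsilon_1,\ldots,\epsilon_{\max(p_1,p_2)}$ together with $\epsilon_{n+1}$, exploiting the specific placement of the negative entries in the rows of $\Gamma$ and $C$ as prescribed by \eqref{e.gamma}--\eqref{e.c}.
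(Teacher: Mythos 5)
Your overall plan is the same as the paper's: decompose $m = 2m'+\epsilon$ with a parity vector $\epsilon\in\{0,1\}^{n+1}$, observe that $V(m,\Gamma,C)$ is the image of $V(2m',\Gamma,C)$ under a fixed affine map determined by $\epsilon$, and absorb the resulting signs and shifts into the target. This reduction is sound, and you have correctly identified the sign-matching as the crux.

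However, the sign-matching claim you postpone as a ``finite combinatorial check'' actually fails with the sign formulas you derived. You compute $\sgn\bigl((\gamma^{(b)})^\epsilon\bigr) = (-1)^{\epsilon_b + \epsilon_{n+1}}$ for $1\le b\le p_1$ and $\sgn\bigl((c^{(b)})^\epsilon\bigr) = (-1)^{\epsilon_b}$ for $1\le b\le p_2$. Then for every $b\le \min(p_1,p_2)$ the product
\begin{equation}
\sgn\bigl((\gamma^{(b)})^\epsilon\bigr)\cdot\sgn\bigl((c^{(b)})^\epsilon\bigr) = (-1)^{\epsilon_{n+1}}
\end{equation}
is \emph{the same for all such $b$}, regardless of how $\epsilon_1,\dots,\epsilon_{\max(p_1,p_2)}$ are chosen. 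So when $\min(p_1,p_2)\ge 2$ the system is overdetermined: for instance with $p_1=p_2=2$, $n=6$, the target $\sgn(x_1)=+$, $\sgn(x_5)=-$, $\sgn(x_3)=+$, $\sgn(x_6)=+$ requires $(-1)^{\epsilon_{n+1}}$ to be simultaneously $-1$ (from $b=1$) and $+1$ (from $b=2$). The problem is that the coordinate $\epsilon_b$ is being asked to control both a $\gamma$-sign and a $c$-sign at the same time.

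The fix is to place the extra negative entries so that the controlling parities are pairwise distinct. Concretely, the single minus sign in $\gamma^{(b)}$ (other than the one at index $n+1$) should sit at position $2b-1$, i.e.\ the same index that the multiplicative coordinate $(\gamma^{(b)})^m$ occupies in $V(m,\Gamma,C)$, and the minus sign in $c^{(b)}$ should sit at position $2p_1+b$. Then
\begin{equation}
\sgn\bigl((\gamma^{(b)})^\epsilon\bigr) = (-1)^{\epsilon_{2b-1}+\epsilon_{n+1}},\qquad
\sgn\bigl((c^{(b)})^\epsilon\bigr) = (-1)^{\epsilon_{2p_1+b}},
\end{equation}
and since the index sets $\{2b-1 : 1\le b\le p_1\}$, $\{2p_1+b : 1\le b\le p_2\}$ and $\{n+1\}$ are pairwise disjoint, you can set $\epsilon_{n+1}=0$ and choose $\epsilon_{2b-1}$, $\epsilon_{2p_1+b}$ independently to match any prescribed sign pattern; the remaining $\epsilon_\nu$ (the even positions $\le 2p_1$) are unconstrained. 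With this placement your argument closes, and the construction of $L^+$ downstream is unaffected since it only sees the absolute values $a_\nu^{(b)}$, $\delta_\nu^{(b)}$. So the gap is real but localized: the ``check'' you expected to be routine actually reveals a misplacement of the signs in $\Gamma$ and $C$ that must be corrected before the parity argument can go through for general $p_1,p_2$.
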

%%%%%%%%%%%%%%%%%%%%%%%%%%%%%% LEMMA LEMMA LEMMA
%%%%%%%%%%%%%%%%%%%%%%%%%%%%%% LEMMA LEMMA LEMMA

%%%%%%%%%%%%%%%%%%%%%%%%%%%%%% PROOF PROOF PROOF
%%%%%%%%%%%%%%%%%%%%%%%%%%%%%% PROOF PROOF PROOF
\begin{proof} Let $x=(x_1,\ldots,x_n)\in\mathbb R^n$ be given. We need to approximate $x$ with vectors of the form $V(m,\Gamma,C)$ for a suitable sequence $m=(m_1,\ldots,m_{n+1})\in\mathbb N_0 ^{n+1}$. Without loss of generality we can assume that $x_j\neq 0$ for all $1\leq j \leq n$. We define the vector $\sigma=(\sigma_1,\ldots\sigma_{n+1})\in\mathbb N_0 ^{n+1}$ as
\begin{align*}
    \sigma_\nu \eqdef \begin{cases} \frac{1-\sgn(x_\nu)}{2},\quad &\mbox{if}\quad 1\leq \nu \leq n,\\ 0 ,\quad & \mbox{if}\quad  \nu= {n+1}.\end{cases}
\end{align*}
We claim that there exists a sequence $m=m^{(\tau)}$ such that $V(2m^{(\tau)}+\sigma, \Gamma, C)\rightarrow x$, as $\tau \rightarrow \infty$. Indeed we have that
    \begin{align*}
        V(2m+\sigma,\Gamma,C)=\bigg( &( \gamma ^{(1)})^{2m} ( \gamma ^{(1)})^\sigma ,\sum_{\nu=1} ^{n+1}\frac{2m_\nu}{ \gamma_\nu ^{(1)}}+\sum_{\nu=1} ^{n+1}\frac{\sigma_\nu}{ \gamma_\nu ^{(1)}} ,\\&\vdots\\& (  \gamma ^{(p_1)})^{2m} ( \gamma ^{(p_1)})^{\sigma},\sum_{\nu=1} ^{n+1}\frac{2m_\nu}{ \gamma_\nu ^{(p_1)}}+\sum_{\nu=1} ^{n+1}\frac{\sigma_\nu}{ \gamma_\nu ^{(p_1)}},\\& ( c^{(1)})^{2m}( c^{(1)})^{\sigma},\ldots,( c^{(p_2)})^{2m}( c^{(p_2)})^{\sigma} \bigg).
    \end{align*}
Consider now the vector $y\in\mathbb R^n$ defined as
\begin{align*}
    y\eqdef\big(\frac{x_1}{( \gamma^{(1)})^\sigma},x_2-\sum_{\nu=1} ^{n+1} \frac{\sigma_\nu}{ \gamma_\nu ^{(1)}},\ldots,\frac{x_{2p_1-1}}{( \gamma^{(p_1)})^\sigma},x_{2p_1}-\sum_{\nu=1} ^{n+1} \frac{\sigma_\nu}{ \gamma_\nu ^{(p_1)}},\frac{x_{2p_1+1}}{( c^{(1)})^\sigma},\ldots,\frac{x_n}{ ( c^{(p_2)})^\sigma}\big).
\end{align*}
Setting
\begin{align*}
	V_1\eqdef \big(0,\sum_{\nu=1} ^{n+1} \frac{\sigma_\nu}{ \gamma_\nu ^{(1)}},\ldots,0,\sum_{\nu=1} ^{n+1} \frac{\sigma_\nu}{ \gamma_\nu ^{(p_1)}},0,0,\ldots,0\big)
\end{align*}
and
\begin{align*}
	V_2 \eqdef \big((\gamma ^{(1)})^\sigma ,1,\ldots,(\gamma ^{(p_1)})^\sigma ,1,(c^{(1)})^\sigma,\ldots,(c^{(p_2)})^\sigma \big),
\end{align*}
we see that
\begin{align}\label{e.inner}
	V(2m+\sigma,\Gamma,C)=V_2\circ V(2m,\Gamma,C)+V_1\quad\mbox{and}\quad x=V_2\circ y+V_1.
\end{align}
Here we denote by $u\circ v$ the \emph{Hadamard product} of $u,v\in\mathbb R^n$: If $u=(u_1,\ldots,u_n)$, $v=(v_1,\ldots,v_n)$ then
 $$u\circ v=(u_1v_1,u_2v_2,\ldots,u_nv_n)\in\mathbb R^n.$$
For any $1\leq j \leq p_1$ we have that
\begin{align*}
    y_{2j-1}& =\frac{x_j}{( \gamma_1 ^{(j)})^{\sigma_1}\cdots ( \gamma_{n+1} ^{(j)})^{\sigma_{n+1}}}\\
&= \frac{x_j}{( a_1 ^{(j)})^{\sigma_1}\cdots ( a_{j-1} ^{(j)})^{\sigma_{j-1}} ( -a_{j} ^{(j)})^{\sigma_{j}}( a_{j+1} ^{(j+1)})^{\sigma_{j+1}}\cdots (- a_{n+1} ^{(j)})^{\sigma_{n+1}}}\\
&=\frac{(-1)^{\sigma_j}x_j}{( a_1 ^{(j)})^{\sigma_1}\cdots ( a_{n+1} ^{(j)})^{\sigma_{n+1}}}=\frac{|x_j|}{( a_1 ^{(j)})^{\sigma_1}\cdots ( a_{n+1} ^{(j)})^{\sigma_{n+1}}}>0.
\end{align*}
Similarly we can see that $y_j>0$ for all $2p_1+1\leq j \leq n$. This shows that $y\in (\mathbb R^+ \times\mathbb R)^{p_1}\times \mathbb (\mathbb R^+)^{p_2}$ and thus there is a sequence $m^{(\tau)}$ such that $V(2m^{(\tau)},\Gamma,C)\rightarrow y$ as $\tau\rightarrow \infty$. By \eqref{e.inner} we have that
\begin{align}
\lim_{\tau\to+\infty} V(2m^{(\tau)}+\sigma,\Gamma,C)=\lim_{\tau\to+\infty} V_2\circ V(2m^{(\tau)},\Gamma,C)+V_1= V_2 \circ y+V_1=x.
\end{align}
Since $x\in\mathbb R^n$ was arbitrary this concludes the proof of the lemma.
\end{proof}
%%%%%%%%%%%%%%%%%%%%%%%%%%%%%% PROOF PROOF PROOF
%%%%%%%%%%%%%%%%%%%%%%%%%%%%%% PROOF PROOF PROOF

Lemma \ref{l.positive} implies that in order to show that the $(n+1)$-tuple $T$ is hypercyclic it is enough to show that $\{V(2m,\Gamma,C):m\in\mathbb N_0 ^{n+1}\}$ is dense in $(\mathbb R^+ \times\mathbb R)^{p_1}\times \mathbb (\mathbb R^+)^{p_2}$, where $\Gamma$ and $C$ are defined by equations \eqref{e.gamma} and \eqref{e.c} respectively. We can reformulate this to get a linear condition in $m\in\mathbb N_0 ^{n+1}$, like in Corollary \ref{c.<3hyper}. Indeed, observe that the matrix $L$ now becomes
\begin{eqnarray*}
    L^+=
    \begin{pmatrix}
        \log a_1 ^{(1)} & \log  a_2 ^{(1)} & \log a_3 ^{(1)}&\cdots & \log a_{p_1-1} ^{(1)} & \log a_{p_1} ^{(1)}& \log a_{p_1+1} ^{(1)}& \cdots &\log  a_{n+1}^{(1)} \\
        {-1}/{ a_1 ^{(1)}} & {1}/{ a_2 ^{(1)}} & {1}/{ a_3 ^{(1)}} &\cdots& 1/ a_{p_1-1} ^{(1)}& 1/ a_{p_1} ^{(1)}& 1/ a^{(1)}  _{p_1+1}&\cdots & {-1}/{ a_{n+1} ^{(1)}} \\
        \log a_1 ^{(2)} & \log  a_2 ^{(2)} & \log a_3 ^{(2)}&\cdots& \log a_{p_1-1} ^{(2)} & \log a_{p_1} ^{(2)}& \log a_{p_1+1} ^{(2)}&\cdots &\log a_{n+1}^{(2)} \\
        {1}/{ a_1 ^{(2)}} & {-1}/{ a_2 ^{(2)}} &  {1}/{ a_3 ^{(2)}} &\cdots &1/ a_{p_1-1} ^{(2)} & 1/ a_{p_1} ^{(2)}& 1/ a_{p_1+1} ^{(2)}&\cdots & {-1}/{ a_{n+1} ^{(2)}} \\
        \vdots & \vdots & \vdots &\ddots &\vdots & \vdots & \vdots&\ddots&\vdots\\
        \log a_1 ^{(p_1)} & \log a_2 ^{(p_1)} & \log a_3 ^{(p_1)}    & \cdots& \log a_{p_1-1} ^{(p_1)} & \log a_{p_1} ^{(p_1)}& \log a_{p_1+1} ^{(p_1)}&\cdots &\log a_{n+1} ^{(p_1)} \\
        {1}/{a_1 ^{(p_1)}} & {1}/{a_2 ^{(p_1)}} &{1}/{a_3 ^{(p_1)}}& \cdots& 1/ a_{p_1-1} ^{(p_1)} & -1/ a_{p_1} ^{(p_1)}& 1/ a_{p_1+1} ^{(p_1)}&\cdots & {-1}/{a_{n+1} ^{(p_1)}} \\
        \log \delta_1 ^{(1)} & \log \delta_2 ^{(1)} &\log \delta_3 ^{(1)}&\cdots& \log \delta_{p_1-1} ^{(1)} &\log \delta_{p_1} ^{(1)} &\log \delta_{p_1+1} ^{(1)} & \cdots &\log \delta_{n+1} ^{(1)} \\
        \vdots & \vdots & \vdots &\ddots &\vdots & \vdots & \vdots&\ddots&\vdots\\
        \log \delta_1 ^{(p_2)} & \log \delta_2 ^{(p_2)} &\log \delta_3 ^{(p_2)} &\cdots&\log \delta_{p_1-1} ^{(p_2)}&\log \delta_{p_1} ^{(p_2)}&\log \delta_{p_1+1} ^{(p_2)}&\cdots &\log \delta_{n+1} ^{(p_2)}
    \end{pmatrix}
    ,
\end{eqnarray*}
where $a^{(b)}$ and $\delta^{(b)}$ have all their entries positive. We then have the desired inverse of Corollary \ref{c.<3hyper}:
\begin{proposition}\label{p.reduce}
    Suppose that the set $\{L^+m^T:m\in\mathbb N_0 ^{n+1}\}$ is dense in $\mathbb R^n$. Then $T$ is hypercyclic.
\end{proposition}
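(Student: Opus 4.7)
Thanks to Lemma \ref{l.positive}, it suffices to establish that $\{V(2m,\Gamma,C) : m\in\mathbb N_0 ^{n+1}\}$ is dense in $(\mathbb R^+\times\mathbb R)^{p_1}\times(\mathbb R^+)^{p_2}$. The whole point of introducing $L^+$ (with positive numbers under the logarithms) and of the doubling $m\mapsto 2m$ is to eliminate the sign nuisance coming from \eqref{e.gamma} and \eqref{e.c}: since $\gamma_\nu^{(b)}\in\{+a_\nu^{(b)},-a_\nu^{(b)}\}$, raising to an even power produces $(\gamma^{(b)})^{2m}=\prod_{\nu=1}^{n+1}(a_\nu^{(b)})^{2m_\nu}>0$, and the same remark applies to $c^{(b)}$.

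The key step is to verify the pointwise identity $V(2m,\Gamma,C)=\Phi(L^+ m^T)$, where $\Phi\colon\mathbb R^n\to(\mathbb R^+\times\mathbb R)^{p_1}\times(\mathbb R^+)^{p_2}$ is the continuous, invertible map
\begin{equation*}
    \Phi(z_1,\ldots,z_n)\eqdef\bigl(e^{2z_1},\,2z_2,\,e^{2z_3},\,2z_4,\,\ldots,\,e^{2z_{2p_1-1}},\,2z_{2p_1},\,e^{2z_{2p_1+1}},\,\ldots,\,e^{2z_n}\bigr).
\end{equation*}
For $1\leq b\leq p_1$ the $(2b-1)$-th coordinate of $V(2m,\Gamma,C)$ is $(\gamma^{(b)})^{2m}=e^{2\sum_\nu m_\nu\log a_\nu^{(b)}}=e^{2(L^+ m^T)_{2b-1}}$, using that $|\gamma_\nu^{(b)}|=a_\nu^{(b)}$. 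The $(2b)$-th coordinate is $\sum_\nu 2m_\nu/\gamma_\nu^{(b)}=2(L^+ m^T)_{2b}$, because the pattern of signs $\pm 1/a_\nu^{(b)}$ in the $2b$-th row of $L^+$ was designed precisely to reproduce the signs of $1/\gamma_\nu^{(b)}$. The last $p_2$ coordinates, corresponding to the $1\times 1$ Jordan blocks, are handled in exactly the same way with $c$ in place of $\gamma$.

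Given this identity, the proof finishes in one line. Since $\Phi$ is a homeomorphism from $\mathbb R^n$ onto $(\mathbb R^+\times\mathbb R)^{p_1}\times(\mathbb R^+)^{p_2}$, the hypothesis that $\{L^+ m^T:m\in\mathbb N_0 ^{n+1}\}$ is dense in $\mathbb R^n$ pushes forward, via $\Phi$, to density of $\{V(2m,\Gamma,C):m\in\mathbb N_0 ^{n+1}\}$ in $(\mathbb R^+\times\mathbb R)^{p_1}\times(\mathbb R^+)^{p_2}$. Lemma \ref{l.positive} then delivers the hypercyclicity of $T$. There is no genuine obstacle at this stage; the content of the argument was already paid for in the careful choice of signs in \eqref{e.gamma}--\eqref{e.c}, which was made so that the above bookkeeping identity $V(2m,\Gamma,C)=\Phi(L^+ m^T)$ becomes automatic.
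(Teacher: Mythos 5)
Your proof is correct and follows essentially the same route as the paper's: transfer density of $\{L^+m^T\}$ to density of $\{V(2m,\Gamma,C)\}$ coordinate by coordinate using $|\gamma_\nu^{(b)}|=a_\nu^{(b)}$ and $|c_\nu^{(b)}|=\delta_\nu^{(b)}$, then invoke Lemma \ref{l.positive} and Lemma \ref{l.<3hyper}. Your packaging of the transfer through the explicit coordinate-wise homeomorphism $\Phi$, with the identity $V(2m,\Gamma,C)=\Phi(L^+m^T)$, is actually a slightly cleaner formulation than the paper's coordinate-by-coordinate phrasing, since it makes the joint (as opposed to merely marginal) density transfer immediate; the one small nit is that your closing line credits Lemma \ref{l.positive} with delivering hypercyclicity, whereas Lemma \ref{l.<3hyper} is also needed for the final step.
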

\begin{proof} Indeed, assuming that the set $\{L^+m^T:m\in\mathbb N_0 ^{n+1}\}$ is dense in $\mathbb R^n$ we immediately conclude that the set $\{V(2m,\Gamma,C):m\in\mathbb N_0 ^{n+1}\}$ is dense in $(\mathbb R^+ \times\mathbb R)^{p_1}\times \mathbb (\mathbb R^+)^{p_2}$. To see this note that, for any $1\leq b \leq p_1$, the set
    \begin{align*}
        \big\{2\sum_{\nu=1} ^{n+1} m_\nu \log a_\nu ^{(b)},\quad (m_1,\ldots,m_{n+1})\in\mathbb N_0 ^{n+1}\big\},
    \end{align*}
is dense in $\mathbb R$ if and only if the set
\begin{align*}
    \big\{(a_1^{(b)})^{2m_1}\cdots (a_{n+1}^{(b)})^{2m_{n+1}},\quad (m_1,\ldots,m_{n+1})\in\mathbb N_0 ^{n+1}\big\},
\end{align*}
is dense in $\mathbb R^+$. However this is the same as saying that the set
\begin{align*}
    \big\{(\gamma_1^{(b)})^{2m_1}\cdots (\gamma_{n+1}^{(b)})^{2m_{n+1}},\quad (m_1,\ldots,m_{n+1})\in\mathbb N_0 ^{n+1}\big\},
\end{align*}
is dense in $\mathbb R^+$ since $|\gamma_\nu ^{(b)}|=a_\nu ^{(b)}>0$ for all choices of $\nu$ and $b$. We reason similarly for the $c_\nu ^{(b)}$'s for  $1\leq b \leq p_2$. However, by Lemma \ref{l.positive} this implies that the set $\{V(m,\Gamma,C):m\in\mathbb N_0 ^{n+1}\}$ is dense in $\mathbb R^n$. By Lemma \ref{l.<3hyper} we then get that $T$ is hypercyclic.
\end{proof}
We will now construct the matrix $L^+$ so that $\overline{\{L^+m^T:m\in\mathbb N_0 ^{n+1}\}}=\mathbb R^n$. To that end it will be helpful to consider the $n+1$ vectors $u_1,\ldots,u_{n+1}\in\mathbb R^n$ which are just the corresponding columns of the $n\times (n+1) $ matrix $L^+$. That is we have:
\begin{align}
    \notag u_1 &\eqdef \bigg(\log a_1 ^{(1)},-\frac{1}{a_1 ^{(1)}},\log a_1 ^{(2)},\frac{1}{a_1 ^{(2)}},\log a _1 ^{(3)},\frac{1}{a_1 ^{(3)}},\ldots,\log a_1 ^{(p_1)},\frac{1}{a_1 ^{(p_1)}},\log\delta_1 ^{(1)},\ldots,\log \delta_1 ^{(p_1)}\bigg), \\
\notag   u_2 &\eqdef \bigg(\log a_2 ^{(1)},\frac{1}{a_2 ^{(1)}},\log a_2 ^{(2)},-\frac{1}{a_2 ^{(2)}},\log a _2 ^{(3)},\frac{1}{a_2 ^{(3)}},\ldots,\log a_2 ^{(p_1)},\frac{1}{a_2 ^{(p_1)}},\log\delta_2 ^{(1)},\ldots,\log \delta_2 ^{(p_1)}\bigg),\\ &\vdots \label{e.u1}\\ \notag
    u_{p_1} &\eqdef \bigg(\log a_{p_1} ^{(1)},\frac{1}{a_{p_1} ^{(1)}},\log a_{p_1}^{(2)},\frac{1}{a_{p_1} ^{(2)}},\log a _{p_1} ^{(3)},\frac{1}{a_{p_1} ^{(3)}},\ldots,\log a_{p_1} ^{(p_1)},-\frac{1}{a_{p_1} ^{(p_1)}},\log\delta_{p_1} ^{(1)},\ldots,\log \delta_{p_1} ^{(p_1)}\bigg),\\ &\vdots \notag \\ \notag
    u_{n+1} &\eqdef \bigg(\log a_{n+1} ^{(1)},-\frac{1}{a_{n+1} ^{(1)}},\log a_2 ^{(2)},-\frac{1}{a_{n+1} ^{(2)}},\ldots,\log a_{n+1} ^{(p_1)},-\frac{1}{a_{n+1} ^{(p_1)}},\log\delta_{n+1} ^{(1)},\ldots,\log \delta_{n+1} ^{(p_1)}\bigg).\\
\end{align}

The heart of the proof is the following theorem:
\begin{theorem}
    \label{t.vectorsu} For a positive integer $n\geq 3$, let $p_1\geq 1$ and $p_2\geq 0$ be non-negative integers such that $n=2p_1+p_2$. Then there exist real numbers $a_\nu ^{(b)}>0$, $1\leq b\leq p_1$, $1\leq \nu\leq n+1$ and $\delta_\nu^{(b)}>0$, $1\leq b \leq p_2$, $1\leq \nu \leq n+1$, such that, if we define the vectors $u_1,u_2,\ldots , u_{n+1}$ by \eqref{e.u1}, the following conditions are satisfied:
    \begin{list}
        {}{}
        \item{(i)} The vectors $u_1,\ldots,u_{n}$ are linearly independent over $\mathbb R$.
        \item{(ii)} There exist positive \textbf{irrational} numbers $c_1,\ldots,c_n$ such that $u_{n+1}=-\sum_{\nu=1} ^{n} c_\nu u_\nu$.
    \end{list}
    We conclude that
    \begin{list}
        {}{}
        \item{(iii)} Any $n$ of the vectors $u_1,\ldots,u_n,u_{n+1}$ are $\mathbb R$-linearly independent.
        \item{(iv)} The vectors $u_1,\ldots,u_n,u_{n+1}$ are $\mathbb Q$-linearly independent.
    \end{list}
\end{theorem}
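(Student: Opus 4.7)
The plan is to dispose of (iii) and (iv) first as formal consequences of (i) and (ii), and then construct the parameters $a_\nu^{(b)}$ and $\delta_\nu^{(b)}$ inductively, invoking Lemma \ref{l.nonlinear} at the step where irrationality must be enforced.

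For (iii), I would suppose that some $n$-subset $S$ of $\{u_1, \ldots, u_{n+1}\}$ is $\mathbb{R}$-linearly dependent. By (i), $S$ must contain $u_{n+1}$, so $S = \{u_\nu : 1 \le \nu \le n,\ \nu \ne k\} \cup \{u_{n+1}\}$ for some $k \in \{1,\dots,n\}$. Any nontrivial relation in $S$ must have nonzero coefficient on $u_{n+1}$ (otherwise it contradicts (i)); solving for $u_{n+1}$ and comparing with (ii) via the uniqueness of coordinates in the basis $\{u_1, \ldots, u_n\}$ forces $c_k = 0$, contradicting $c_k > 0$. For (iv), a rational relation $\sum_{\nu=1}^{n+1} q_\nu u_\nu = 0$ combined with (ii) gives
\begin{equation*}
\sum_{\nu=1}^n (q_\nu - q_{n+1} c_\nu)\, u_\nu = 0,
\end{equation*}
so (i) implies $q_\nu = q_{n+1} c_\nu$ for every $\nu$; the irrationality of each $c_\nu$ then forces $q_{n+1} = 0$ and consequently all $q_\nu = 0$.

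For the construction itself I plan to induct on $n$, treating separately the two enlargement steps $p_2 \mapsto p_2 + 1$ (which appends a single logarithmic row to $L^+$ coming from a new $1 \times 1$ Jordan block) and $p_1 \mapsto p_1 + 1$ (which appends a coupled log/reciprocal pair of rows from a new $2 \times 2$ block). The base case $n = 3$ with $(p_1,p_2)=(1,1)$ can be handled by an explicit choice of the five free scalars. Condition (i) is a generic open condition on the parameters, and the positivity of the $c_\nu$ in (ii) is likewise preserved under small perturbations, so both can be maintained inside a neighborhood of any valid configuration once such a configuration is produced.

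The hard part is guaranteeing that the coefficients $c_\nu$ are not merely positive but also irrational. Once $u_1, \ldots, u_n$ are fixed, the representation demanded by (ii) together with the structural form of $u_{n+1}$ from \eqref{e.u1} reduces to a nonlinear system in the remaining parameters $a_{n+1}^{(b)}, \delta_{n+1}^{(b)}$ and in the unknowns $c_\nu$: the $p_1$ reciprocal constraints couple the logarithmic data to the arithmetic data in a transcendental way. I expect Lemma \ref{l.nonlinear} to deliver a solution of this system in which the $c_\nu$ come out simultaneously positive and irrational, most likely by exploiting transcendence of logarithms of (appropriately chosen) algebraic numbers so that any putative rational relation among the $c_\nu$ would translate into an impossible algebraic relation among the $\log a_\nu^{(b)}$. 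Carrying this through the induction yields the vectors claimed by the theorem.
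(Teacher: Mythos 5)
Your derivations of (iii) and (iv) from (i) and (ii) are correct and match the standard argument, so that part is fine. The problem is with the construction itself, where your sketch inverts the logical order of the paper's argument and, as a result, turns an easy free choice into what you describe as ``the hard part.''

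In the paper the positive irrational numbers $c_1,\ldots,c_n$ are \emph{chosen first}, not solved for. One fixes the $(n-1)\times(n+1)$ block of parameters that makes up $\tilde L^o(p_1,p_2)$ (produced by Lemma~\ref{l.01constr} so that its determinant is nonzero), then picks arbitrary irrationals $c_1,\ldots,c_{p_1}$, picks $c_{p_1+1},\ldots,c_{n-1}$ irrational and large enough to satisfy the positivity constraint \eqref{e.auxpositive}, and leaves $c_n$ as a free parameter. The nonlinear equations \eqref{e.main1}--\eqref{e.main2} are then used to define $a_n^{(b)}(c_n)$ and $\delta_n^{(b)}(c_n)$ so that the log-row and reciprocal-row of $u_{n+1}$ arising from the same $2\times 2$ block are mutually consistent. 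Lemma~\ref{l.nonlinear} is invoked purely for the elementary facts that this equation has a unique positive root and that the root tends to $1$ as $c_n\to\infty$; there is no transcendence theory anywhere, and no need to ``prove'' the $c_\nu$ are irrational, since they were selected to be irrational. Your proposed route --- fix $u_1,\ldots,u_n$, write down the system for $(c_\nu, a_{n+1}^{(b)}, \delta_{n+1}^{(b)})$, and hope the $c_\nu$ come out irrational via a transcendence argument --- has no mechanism to force irrationality and would require genuinely new input. You are also missing the step that actually secures condition (i): letting $c_n\to\infty$ drives $\tilde L(p_1,p_2)$ to $\tilde L^o(p_1,p_2)$ in determinant, so a sufficiently large irrational $c_n$ makes $\det \tilde L(p_1,p_2)\neq 0$. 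Without this limiting argument your claim that (i) is a ``generic open condition'' is not by itself sufficient, because the parameters $a_n^{(b)}$ are not free --- they are constrained by \eqref{e.main1} --- so genericity must be established relative to the one remaining free scalar $c_n$, which is exactly what the limit accomplishes. Finally, the positivity requirement $a_{n+1}^{(b)}>0$ (guaranteed through \eqref{e.auxpositive}) is also absent from your sketch and is not automatic.
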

Before giving the proof, let us see how we can use Theorem \ref{t.vectorsu} in order to prove Theorem \ref{t.njordan}.
\begin{proof}
    [Proof of Theorem \ref{t.njordan}] Because of Proposition \ref{p.reduce}, the proof of Theorem \ref{t.njordan} reduces to showing that the set $\{L^+m^T:m\in\mathbb N_0 ^{n+1}\}$ is dense in $\mathbb R^{n+1}$. By the definition of the vectors $u_1,\ldots,u_{n+1}$ this is equivalent to showing that the set
\begin{align*}
	\{L^+m^T:m\in\mathbb N_0 ^{n+1}\}=\{ m_1u_1+\cdots+m_{n+1}u_{n+1}:(m_1,\ldots,m_{n+1})\in\mathbb N_0 ^{n+1}\},
\end{align*}
is dense in $\mathbb R^n$. That is, we need to show that any $x\in\mathbb R^n$ can be approximated by linear combinations of the vectors $u_1,\ldots,u_{n+1}$ with coefficients in $\mathbb N$. To that end, we fix a $x\in\mathbb R^n$ and $\epsilon >0$. We write $x$ in the form
    \begin{align}
        x=R_1 u_1+\cdots R_n u_n+r_1 u_1\cdots+r_n u_n,
    \end{align}
    where $R_1,\ldots,R_n\in\mathbb Z$ and $r_1,\ldots,r_n\in [0,1)$. From Kronecker's theorem (see for example \cite{HW}) and (iv) of Theorem \ref{t.vectorsu} it follows that the sequence $\mathbb N u_{n+1}$ is dense in $\mathbb R^n/(\mathbb Z u_1+\cdots+\mathbb Z u_{n})$. So, we can find arbitrarily large $\ell\in\mathbb N$ such that
    \begin{align}
        \ell u_{n+1}=R'_1 u_1+\cdots+R'_{n}u_n+r'_1 u_1+\cdots+r'_{n}u_{n},
    \end{align}
    where $R'_\nu\in\mathbb Z$ and $r'_\nu\in(0,1)$ for all $1\leq\nu\leq n$, and
    \begin{align}
        |r_\nu-r'_\nu|<\frac{\epsilon}{\sum_{\nu=1} ^{n}\|u_\nu\|} , \quad \mbox{for all} \quad 1\leq \nu \leq n.
    \end{align}
    Now condition (ii) of Theorem \ref{t.vectorsu} implies that $R'_\nu<0$ for all $1\leq \nu \leq n$. In fact we can make the coefficients $R'_\nu$ as negative as we please by taking larger values of $\ell$. Let us now write
    \begin{align}
        x'\eqdef \ell u_{n+1}+ (R_1-R'_1)u_1+\cdots+(R_{n}-R'_{n})u_{n},
    \end{align}
    where we make sure that the coefficients $R_\nu-R'_\nu> 0$ for all $1\leq \nu \leq n$ by taking $\ell \in\mathbb N $ as large as necessary. We then have
    \begin{align}
        \|x-x'\|=\NOrm \sum_{\nu=1} ^{n}(r_\nu-r' _\nu) u_\nu. .\leq \sum_{\nu=1} ^{n} |r_\nu-r'_\nu| \ \|u_\nu\|<\epsilon.
    \end{align}
    Since $x'$ is a linear combination of $u_1,\ldots,u_{n+1}$ with coefficients in $\mathbb N$ we are done.
\end{proof}

In order to organize the proof of Theorem~\ref{t.vectorsu} we need two additional technical lemmas.
\begin{lemma}
    \label{l.nonlinear}Let $\delta_1,\delta_2>0$. For any $c>0$ the non-linear equation
    \begin{align*}
        x^{c+1}-\delta_1 x - \delta_2 c=0,
    \end{align*}
    has a unique positive solution $x=x(c)$. We have that $\lim_{c\rightarrow +\infty} x(c) = 1$.
\end{lemma}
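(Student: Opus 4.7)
The plan is to analyze $f(x) := x^{c+1} - \delta_1 x - \delta_2 c$ directly on $(0,\infty)$. First I would compute $f'(x) = (c+1)x^c - \delta_1$, which is strictly increasing from $-\delta_1$ at $0^{+}$ to $+\infty$, so it vanishes at a unique point $x_0 = (\delta_1/(c+1))^{1/c}$. Consequently $f$ is strictly decreasing on $(0,x_0)$ and strictly increasing on $(x_0,\infty)$; combined with $f(0) = -\delta_2 c < 0$ and $f(x) \to +\infty$ as $x \to \infty$, this forces a unique positive root $x(c)$, lying in $(x_0,\infty)$. In particular $f < 0$ on $(0, x(c))$ and $f > 0$ on $(x(c),\infty)$, a sign description that will be my main tool for the asymptotic.

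For the limit, the strategy is to sandwich $x(c)$ between $1$ and a sequence tending to $1$ from above by evaluating $f$ at well-chosen test points. Since $f(1) = 1 - \delta_1 - \delta_2 c < 0$ for every sufficiently large $c$, the sign description above immediately gives $x(c) > 1$ for $c$ large. For the upper bound I would take $\epsilon_c = 3(\log c)/c$ and show $f(1+\epsilon_c) > 0$ eventually: using the elementary inequality $\log(1+\epsilon) \geq \epsilon/2$ valid for $\epsilon$ small, one obtains $(1+\epsilon_c)^{c+1} \geq e^{(c+1)\epsilon_c/2} \geq c^{3/2}$, which eventually dominates the linear term $\delta_1(1+\epsilon_c) + \delta_2 c$. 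Hence $1 < x(c) < 1 + 3(\log c)/c$ for $c$ large, and the conclusion follows by squeezing.

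The main obstacle is purely a bookkeeping one, namely producing an explicit $\epsilon_c \to 0$ with $f(1+\epsilon_c) > 0$ eventually while keeping the inequalities clean. A completely soft alternative is to take logarithms in the defining identity, writing $(c+1)\log x(c) = \log(\delta_1 x(c) + \delta_2 c)$, and argue by subsequences: any accumulation point $L > 1$ of $x(c)$ would force the left-hand side to grow linearly in $c$ while the right-hand side grows only like $\log c$, whereas $L < 1$ would make the left diverge to $-\infty$ while the right diverges to $+\infty$; both are impossible, so $L = 1$ is the only possibility and $x(c) \to 1$. Either route closes the argument, but the explicit $\epsilon_c$ version has the advantage of supplying the quantitative rate $x(c) - 1 = O((\log c)/c)$, which may be convenient later when the lemma is applied.
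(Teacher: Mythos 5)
Your proof is correct and follows the same strategy as the paper: analyze $f'$ to establish a U-shaped profile that forces existence and uniqueness of the positive root, then sandwich $x(c)$ between two explicit sequences converging to $1$. The only variation is your choice of sandwich points ($1$ from below and $1 + 3(\log c)/c$ from above, rather than the paper's $(\delta_2 c)^{1/(c+1)}$ and $(Ac)^{1/c}$ for $A>\delta_2$), which has the small bonus of yielding the quantitative rate $x(c) - 1 = O((\log c)/c)$.
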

\begin{proof}
    First observe that the function $f(x)=x^{c+1}-\delta_1 x - \delta_2 c$ is continuously differentiable in $x\in\mathbb R^+$ and satisfies $f(\delta_1^\frac{1}{c})=-\delta_2 c<0$ and $f(x)>0$ for $x$ large. Thus there is at least one $x_o\in\mathbb R^+$, $x_o>(\delta_1)^\frac{1}{c}$ such that $f(x_o)=0$. Looking at the derivative of $f$, $f'(x)=(c+1)x^c-\delta_1$ we see that $f$ has exactly one critical point at $x_1=\big(\frac{\delta_1}{c+1}\big)^\frac{1}{c}<x_o$. The function $f$ is negative for $0<x\leq x_1$ and strictly increasing for $x>x_1$ thus the solution $x_o$ is unique. We can define then the function $x(c)$ to be this unique solution.

    In order to prove that the function $x=x(c)$ has a limit as $c\rightarrow +\infty$ we argue as follows. First observe that for any $c>0$ we have that $f((\delta_2c)^\frac{1}{c+1})=-\delta_1 (\delta_2 c)^\frac{1}{c+1}<0$. On the other hand, for any $A>\delta_2$ we have that $f((Ac)^\frac{1}{c})=(Ac)^\frac{1}{c}(A-\frac{\delta_2}{(Ac)^\frac{1}{c}})c-\delta_1(Ac)^\frac{1}{c}\rightarrow +\infty$ as $c\rightarrow +\infty$. We thus see that for $c$ large enough, we have that $(\delta_2 c)^\frac{1}{c+1} < x(c) < (Ac)^\frac{1}{c}.$ Letting $c\rightarrow +\infty$ we conclude that $\lim_{c \rightarrow +\infty}x(c)=1$.
\end{proof}
In the following Lemma we give the basic construction which is `half-way there' to get the vectors $u_1,\ldots,u_{n+1}$ we need in Theorem \ref{t.vectorsu}. For this, the following notation will be useful. For any positive integer $n\geq 2$ and positive integers $p_1\geq 1$ and $p_2\geq 0$ such that $n=2p_1+p_2$, we consider the matrix $\tilde L(p_1,p_2)$ as follows
\begin{eqnarray*}
    \tilde L(p_1,p_2)= \begin{pmatrix}
        \log a_1 ^{(1)} & \frac {-1}{a_1 ^{(1)} } & \log a_1 ^{(2)} &   \frac{1}{a_1 ^{(2)}} & \cdots &\log a_1 ^{(p_1)} & \frac{1}{a_1 ^{(p_1)}}& \log \delta_1 ^{(1)}& \cdots & \log \delta_1 ^{(p_2)}  \\
        \log a_2 ^{(1)} & \frac{1}{ a_2 ^{(1)}} & \log a_2 ^{(2)} &   \frac{-1}{ a_2 ^{(2)}} & \cdots & \log  a_2^{(p_1)} & \frac{1}{ a_2^{(p_1)}} & \log \delta_2 ^{(1)} & \cdots & \log \delta_2 ^{(p_2)} \\
     \vdots &  \vdots & \vdots & \vdots & \ddots & \vdots & \vdots &\vdots &\ddots&\vdots \\
        \log a_{p_1} ^{(1)} & \frac{1}{ a_{p_1} ^{(1)}} & \log a_{p_1} ^{(2)} &  \frac{1}{ a_{p_1} ^{(2)}} &  \cdots & \log  a_{p_1}^{(p_1)} & \frac{-1}{ a_{p_1}^{(p_1)} }& \log \delta_{p_1} ^{(1)} & \cdots & \log \delta_{p_1} ^{(p_2)} \\
        \log a_{p_1+1} ^{(1)} &  \frac{1}{ a_{p_1+1} ^{(1)} }& \log a_{p_1+1} ^{(2)} &  \frac{1}{ a_{p_1+1} ^{(2)}} &  \cdots & \log  a_{p_1+1}^{(p_1)} & \frac{1}{ a_{p_1+1}^{(p_1)} }& \log \delta_{p_1+1} ^{(1)} & \cdots & \log \delta_{p_1+1} ^{(p_2)} \\
     \vdots &  \vdots & \vdots & \vdots & \ddots & \vdots & \vdots &\vdots &\ddots&\vdots \\
        \log a_{n-1} ^{(1)} & \frac{1}{ a_{n-1} ^{(1)}} & \log a_{n-1} ^{(2)} & \frac{1}{ a_{n-1} ^{(2)}} & \cdots &\log  a_{n-1}^{(p_1)} & \frac{1}{ a_{n-1}^{(p_1)}} & \log \delta_{n-1} ^{(1)} & \cdots & \log \delta_{n-1} ^{(p_2)} \\
        \log a_{n} ^{(1)} & \frac{1}{ a_{n} ^{(1)} }&\log a_{n} ^{(2)} & \frac{1}{ a_{n} ^{(2)}} & \cdots &\log  a_{n}^{(p_1)} & \frac{1}{ a_{n}^{(p_1)}} & \log \delta_{n} ^{(1)} &\cdots & \log \delta_{n} ^{(p_2)} \\
\end{pmatrix}
\end{eqnarray*}
where $a_\nu ^{(b)}>0$ for all $1\leq \nu \leq n$, $1\leq b \leq p_1$, and similarly $\delta_\nu ^{(b)}>0$ for $1\leq \nu \leq n$ and $1\leq b \leq p_2$. Observe that the value $p_2=0$ is allowed, in which case, there are no $\log \delta_\nu ^{(b)}$ terms. We also define the closely related matrix $\tilde L ^o(p_1,p_2)$ which is the special case of $\tilde L(p_1,p_2)$ if we set $a_{n} ^{(b)}=1$ for all $1\leq b \leq p_1$ and $\delta_{n} ^{(b)}=1$ for all $1\leq b \leq p_2$. That is we have
\begin{eqnarray*}
    \tilde L^o(p_1,p_2)\eqdef\begin{pmatrix}
        \log a_1 ^{(1)} & \frac {-1}{ a_1 ^{(1)} } & \log a_1 ^{(2)} &   \frac{1}{a_1 ^{(2)}} & \cdots &\log a_1 ^{(p_1)} & \frac{1}{a_1 ^{(p_1)}}& \log \delta_1 ^{(1)}& \cdots & \log \delta_1 ^{(p_2)}  \\
        \log a_2 ^{(1)} & \frac{1}{ a_2 ^{(1)}} & \log a_2 ^{(2)} &   \frac{-1}{ a_2 ^{(2)}} & \cdots & \log  a_2^{(p_1)} & \frac{1}{ a_2^{(p_1)}} & \log \delta_2 ^{(1)} & \cdots & \log \delta_2 ^{(p_2)} \\
     \vdots &  \vdots & \vdots & \vdots & \ddots & \vdots & \vdots &\vdots &\ddots&\vdots \\
        \log a_{p_1} ^{(1)} & \frac{1}{ a_{p_1} ^{(1)}} & \log a_{p_1} ^{(2)} &  \frac{1}{ a_{p_1} ^{(2)}} &  \cdots & \log  a_{p_1}^{(p_1)} & \frac{-1}{ a_{p_1}^{(p_1)} }& \log \delta_{p_1} ^{(1)} & \cdots & \log \delta_{p_1} ^{(p_2)} \\
        \log a_{p_1+1} ^{(1)} &  \frac{1}{ a_{p_1+1} ^{(1)} }& \log a_{p_1+1} ^{(2)} &  \frac{1}{ a_{p_1+1} ^{(2)}} &  \cdots & \log  a_{p_1+1}^{(p_1)} & \frac{1}{ a_{p_1+1}^{(p_1)} }& \log \delta_{p_1+1} ^{(1)} & \cdots & \log \delta_{p_1+1} ^{(p_2)} \\
     \vdots &  \vdots & \vdots & \vdots & \ddots & \vdots & \vdots &\vdots &\ddots&\vdots \\
        \log a_{n-1} ^{(1)} & \frac{1}{ a_{n-1} ^{(1)}} & \log a_{n-1} ^{(2)} & \frac{1}{ a_{n-1} ^{(2)}} & \cdots &\log  a_{n-1}^{(p_1)} & \frac{1}{ a_{n-1}^{(p_1)}} & \log \delta_{n-1} ^{(1)} & \cdots & \log \delta_{n-1} ^{(p_2)} \\
        0 & 1&0 & 1 &\cdots & 0 & 1 & 0 &\cdots &0 \\
\end{pmatrix}
\end{eqnarray*}

With this notation, we have the following Lemma:
\begin{lemma}
    \label{l.01constr} Let $n\geq 2$ be a positive integer and $p_1\geq 1 $ and $p_2\geq 0$ be non-negative integers such that $n=2p_1+p_2$. Then there exist $a_\nu ^{(b)}>0$ for $1\leq \nu \leq n-1$, $1\leq b \leq p_1$, and $\delta_\nu ^{(b)}>0$ for $1\leq \nu \leq n-1$, $1\leq b \leq p_2$ (if $p_2\neq 0$), such that $\det (\tilde L^o(p_1,p_2))\neq 0$.
\end{lemma}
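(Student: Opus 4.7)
The plan is to proceed by induction on the dimension $n$, with base case $n = 2$ and an inductive step that splits according to whether $p_2 \geq 1$ or $p_2 = 0$. Each step will reduce to a $\tilde L^o$ matrix of dimension $n-1$.

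For the base case $n=2$ we have $p_1 = 1$, $p_2 = 0$ and $\det \tilde L^o(1,0) = \log a_1^{(1)}$, which is nonzero for any $a_1^{(1)} > 0$ with $a_1^{(1)} \neq 1$.

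For the inductive step, assume $n \geq 3$ and the lemma holds in all smaller dimensions. First suppose $p_2 \geq 1$. The last column of $\tilde L^o(p_1,p_2)$ contains $\log \delta_\nu^{(p_2)}$ at rows $\nu = 1,\ldots,n-1$ and $0$ at row $n$, so Laplace expansion along this column, viewed as a function of the single variable $\log \delta_{n-1}^{(p_2)}$, yields an affine expression with slope $(-1)^{(n-1)+n}\det M_{n-1,n} = -\det M_{n-1,n}$, where $M_{n-1,n}$ is obtained by deleting row $n-1$ and column $n$. Direct inspection shows that $M_{n-1,n}$ has exactly the form $\tilde L^o(p_1,p_2-1)$: the $p_1$ Jordan-$2$ column pairs survive with their diagonal $-1/a_b^{(b)}$ entries intact (these sit in rows $b \leq p_1 \leq n-2$, which are all retained since $n \geq 3$ forces $p_1 \leq n-2$), the remaining $p_2-1$ log-$\delta$ columns survive, and the last row of the minor, inherited from row $n$ of $\tilde L^o(p_1,p_2)$ after dropping its final $0$ entry, is precisely the reduction row in dimension $n-1$. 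By the inductive hypothesis, choose $a_\nu^{(b)}$ for $1\leq\nu\leq n-2$, $1\leq b\leq p_1$ and $\delta_\nu^{(b)}$ for $1\leq\nu\leq n-2$, $1\leq b\leq p_2-1$ so that $\det M_{n-1,n}\neq 0$; fix the other parameters $a_{n-1}^{(b)}$, $\delta_{n-1}^{(b)}$ ($b\leq p_2-1$) and $\delta_\nu^{(p_2)}$ ($\nu\leq n-2$) to arbitrary positive values; and finally pick $\delta_{n-1}^{(p_2)} > 0$ so that $\log \delta_{n-1}^{(p_2)}$ avoids the unique forbidden value.

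Now suppose $p_2 = 0$ and $p_1 \geq 2$, so $n = 2p_1$. The parameter $a_{n-1}^{(p_1)}$ appears only in row $n-1$, at positions $(n-1,n-1)$ and $(n-1,n)$. Expanding along that row gives
\[
\det \tilde L^o(p_1,0) = A \log a_{n-1}^{(p_1)} + \frac{B}{a_{n-1}^{(p_1)}} + C,
\]
with $A = \det M_{n-1,n-1}$, $B = -\det M_{n-1,n}$, and $C$ independent of $a_{n-1}^{(p_1)}$. The minor $M_{n-1,n}$ has the structure of $\tilde L^o(p_1-1,1)$ of dimension $n-1$: its first $n-2$ columns form the first $p_1-1$ Jordan-$2$ column pairs (with $-1/a_b^{(b)}$ entries at rows $b = 1,\ldots,p_1-1 \leq n-2$ preserved), its last column (originally column $n-1$) is the $\log a^{(p_1)}$ column, which plays the role of $\log \delta^{(1)}$ under the identification $\delta_\nu^{(1)} \leftrightarrow a_\nu^{(p_1)}$ for $\nu = 1,\ldots,n-2$, and its last row is the reduction row of length $n-1$ obtained from the original one by truncating its final $1$ entry. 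By the inductive hypothesis applied to $\tilde L^o(p_1-1,1)$, one can choose the parameters $a_\nu^{(b)}$ for $1\leq\nu\leq n-2$, $1\leq b\leq p_1$ so that $\det M_{n-1,n}\neq 0$, hence $B\neq 0$. The map $t\mapsto A\log t + B/t + C$ is then non-constant on $(0,\infty)$, so we may pick $a_{n-1}^{(p_1)} > 0$ (together with arbitrary positive values for $a_{n-1}^{(b)}$, $b < p_1$) making the determinant nonzero.

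The main obstacle is the structural bookkeeping needed to verify, in each case, that $M_{n-1,n}$ really inherits the $\tilde L^o$ form: the reduction row must retain its alternating pattern of zeros and ones after the column deletion, and none of the diagonal $-1/a_b^{(b)}$ entries may be lost when row $n-1$ is removed, which is ensured by the inequality $p_1 \leq n-2$ for $n \geq 3$. Once these identifications are set up, the induction closes cleanly, and both cases feed back into each other through the recursion $(p_1,p_2)\to(p_1,p_2-1)$ and $(p_1,0)\to(p_1-1,1)$.
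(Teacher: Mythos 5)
Your proof is correct and follows the same strategy as the paper's: induction on $n$, base case $n=2$, and a two-case inductive step splitting on $p_2\geq 1$ versus $p_2=0$, with the key reduction $(p_1,p_2)\to(p_1,p_2-1)$ in the first case and $(p_1,0)\to(p_1-1,1)$ in the second, each established by isolating the last freely chosen entry via a Laplace expansion and invoking the inductive hypothesis to ensure the relevant cofactor is nonzero. The only cosmetic difference is that you expand along a column where the paper expands along the $(n-1)$-st row, but the coefficient of the isolated variable is the same cofactor either way; your explicit check that $p_1\leq n-2$ (so no diagonal $-1/a_b^{(b)}$ entry is lost when row $n-1$ is dropped) is a useful piece of bookkeeping that the paper leaves implicit.
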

\begin{proof}
    We will prove the Lemma by induction on $n$. For each such $n$ we have to consider all possible combinations of $p_1\geq 1$ and $p_2\geq 0$ such that $n=2p_1+p_2$, and this will be reflected in the inductive hypothesis. The first step of the induction is obvious. Indeed, for $n=2$ we necessarily have that $p_1=1$ and $p_2=0$. Then we need to show that there exists a choice of $a_1^{(1)}$ such that the matrix
    \begin{align*}
        \tilde L^o (1,0)=
        \begin{pmatrix}
            \log a_1^{(1)} & -1/ a_1^{(1)}\\
            0 &1,
        \end{pmatrix}
        ,
    \end{align*}
    has non-zero determinant. However this is the case for any $ a_1^{(1)}\neq 1.$

    Now assume the conclusion of the Lemma is true for $n-1$. There are two cases we need to consider depending on whether $p_2=0$ or $p_2\geq 1$. First we consider the case $n=2p_1+p_2$ where $p_1,p_2\geq 1$. Since $2p_1+(p_2-1)=n-1$ and $p_2-1\geq 0$, we can use the inductive hypothesis to get a matrix $\tilde L^o(p_1,p_2-1)$ with non-zero determinant. We need to construct $\tilde L^o(p_1,p_2)$. Let $ a_\nu ^{(b)}$, $1\leq \nu \leq n-2$, $1\leq b \leq p_1$ and $\delta_\nu ^{(b)}$, $1\leq \nu \leq n-2$, $1\leq b \leq p_2-1$, be defined as the corresponding entries of the matrix $\tilde L^o(p_1,p_2-1)$. We give arbitrary positive values to $\delta_\nu ^{(p_2)}$ for all $1\leq \nu \leq n-2$ as well as to $a_{n} ^{(b)}$ for all $1\leq b \leq p_1$ and to $\delta^{(b)}_{n}$ for $1\leq b \leq p_2-1$. Schematically we have
    \begin{eqnarray*}
        \tilde L^o(p_1,p_2)= \begin{pmatrix}
            \log a_1 ^{(1)} & \frac {-1}{ a_1 ^{(1)} } & \cdots &\log a_1 ^{(p_1)} & \frac{1}{a_1 ^{(p_1)}}& \log \delta_1 ^{(1)}& \cdots &\log \delta_1 ^{(p_2-1)}& \log * \\
            \log a_2 ^{(1)} & \frac{1}{ a_2 ^{(1)}} & \cdots & \log  a_2^{(p_1)} & \frac{1}{ a_2^{(p_1)}} & \log \delta_2 ^{(1)} & \cdots & \log \delta_2 ^{(p_2-1)}&\log *\\
             \vdots & \vdots & \ddots & \vdots & \vdots &\vdots &\ddots&\vdots&\vdots \\
            \log a_{p_1} ^{(1)} & \frac{1}{ a_{p_1} ^{(1)}} &  \cdots & \log  a_{p_1}^{(p_1)} & \frac{-1}{ a_{p_1}^{(p_1)} }& \log \delta_{p_1} ^{(1)} & \cdots & \log \delta_{p_1} ^{(p_2-1)}& \log *\\
            \log a_{p_1+1} ^{(1)} &  \frac{1}{ a_{p_1+1} ^{(1)} } &  \cdots & \log  a_{p_1+1}^{(p_1)} & \frac{1}{ a_{p_1+1}^{(p_1)} }& \log \delta_{p_1+1} ^{(1)} & \cdots &\log \delta_{p_1+1} ^{(p_2-1)} &\log *\\
             \vdots & \vdots & \ddots & \vdots & \vdots &\vdots &\ddots&\vdots&\vdots \\
            \log a_{n-2} ^{(1)} & \frac{1}{ a_{n-2} ^{(1)}} & \cdots &\log  a_{n-2}^{(p_1)} & \frac{1}{ a_{n-2}^{(p_1)}} & \log \delta_{n-2} ^{(1)} & \cdots &  \log \delta_{n-2} ^{(p_2-1)} & \log * \\
                \log * & 1/* & \cdots &\log * & 1/* & \log * & \cdots &  \log * & \log \delta_{n-1} ^{(p_2)} \\
            0 & 1&\cdots & 0 & 1 & 0 &\cdots &0&0 \\
    \end{pmatrix}
    \end{eqnarray*}
    where the wildcards `*' denote arbitrary positive choices. Now all the entries of $\tilde L^o(p_1,p_2)$ are defined except for $\log\delta_{n-1} ^{(p_2)}$ which we will now choose as follows. Developing the determinant of $\tilde L^o(p_1,p_2)$ with respect to the elements of the $(n-1)$-st row, we readily see that we can express it in the form
    \begin{align}
        \label{e.determinant} \det{ \tilde L^o(p_1,p_2)}=A+\log \delta_{n-1} ^{(p_2)} \det( \tilde L^o (p_1,p_2,-1)),
    \end{align}
    where $A$ is a constant that \emph{does not} depend on $\delta_{n-1} ^{(p_2)}$ but depends on all the other entries of the matrix which we have already fixed. Since $\det (\tilde L^o(p_1,p_2-1))\neq 0$ by the inductive hypothesis, there is always a choice of $\delta_{n-1} ^{(p_2)}$ such that the right hand side of \eqref{e.determinant} is non-zero so we are done in this case.

    Turning to the case $p_2=0$, that is $n=2p_1$, we need to construct the matrix $\tilde L^o(p_1,0)$ with the desired structure and non-zero determinant. Here the inductive hypothesis implies the existence of a matrix $\tilde L^o(p_1-1,1)$ with non zero determinant:
    \begin{eqnarray*}
        \tilde L^o(p_1-1,1)= \begin{pmatrix}
            \log a_1 ^{(1)} & \frac {-1}{ a_1 ^{(1)} } & \cdots &\log a_1 ^{(p_1-1)} & \frac{1}{a_1 ^{(p_1-1)}}& \log \delta_1 ^{(1)} \\
            \log a_2 ^{(1)} & \frac{1}{ a_2 ^{(1)}} & \cdots & \log  a_2^{(p_1-1)} & \frac{1}{ a_2^{(p_1-1)}} & \log \delta_2 ^{(1)} \\
             \vdots & \vdots & \ddots & \vdots & \vdots &\vdots  \\
            \log a_{p_1-1} ^{(1)} & \frac{1}{ a_{p_1-1} ^{(1)}} &  \cdots & \log  a_{p_1-1}^{(p_1-1)} & \frac{-1}{ a_{p_1-1}^{(p_1-1)} }& \log \delta_{p_1-1} ^{(1)} \\
            \log a_{p_1} ^{(1)} & \frac{1}{ a_{p_1} ^{(1)}} &  \cdots & \log  a_{p_1}^{(p_1-1)} & \frac{1}{ a_{p_1}^{(p_1-1)} }& \log \delta_{p_1} ^{(1)} \\
            \log a_{p_1+1} ^{(1)} &  \frac{1}{ a_{p_1+1} ^{(1)} } &  \cdots & \log  a_{p_1+1}^{(p_1-1)} & \frac{1}{ a_{p_1+1}^{(p_1-1)} }& \log \delta_{p_1+1} ^{(1)}\\
             \vdots & \vdots & \ddots & \vdots & \vdots &\vdots \\
            \log a_{n-2} ^{(1)} & \frac{1}{ a_{n-2} ^{(1)}} & \cdots &\log  a_{n-2}^{(p_1-1)} & \frac{1}{ a_{n-2}^{(p_1-1)}} & \log \delta_{n-2} ^{(1)}  \\
            0 & 1&\cdots & 0 & 1 & 0  \\
    \end{pmatrix}
    \end{eqnarray*}
    We define $a_\nu ^{(b)}$ as the corresponding entries of $\tilde L^o(p_1-1,1)$ for $1\leq \nu \leq n-2$ and $1\leq b \leq p_1-1$. We also define $a_\nu ^{(p_1)}=\delta_\nu ^{(1)}$ for $1\leq \nu \leq n-2$. Finally we give arbitrary positive values to $a_n ^{(b)}$ for all $1\leq b \leq p_1-1$. This defines all entries of $\tilde L^o(p_1-1,1)$ except the two rightmost entries on the $(n-1)$-st row. These are defined by the positive number $a_{n-1} ^{(p_2)}$ which we will choose now.  The matrix $\tilde L^o(p_1,0)$ has the following structure:
    \begin{eqnarray*}
        \tilde L^o(p_1,0)= \begin{pmatrix}
            \log a_1 ^{(1)} & \frac {-1}{ a_1 ^{(1)} } & \cdots &\log a_1 ^{(p_1-1)} & \frac{1}{a_1 ^{(p_1-1)}}& \log \delta_1 ^{(1)} &\frac{1}{\delta_1 ^{(1)}} \\
            \log a_2 ^{(1)} & \frac{1}{ a_2 ^{(1)}} & \cdots & \log  a_2^{(p_1-1)} & \frac{1}{ a_2^{(p_1-1)}} & \log \delta_2 ^{(1)}&\frac{1}{\delta_2 ^{(1)}} \\
             \vdots & \vdots & \ddots & \vdots & \vdots &\vdots &\vdots \\
            \log a_{p_1-1} ^{(1)} & \frac{1}{ a_{p_1-1} ^{(1)}} &  \cdots & \log  a_{p_1-1}^{(p_1-1)} & \frac{-1}{ a_{p_1-1}^{(p_1-1)} }& \log \delta_{p_1-1} ^{(1)}&\frac{1}{\delta_{p_1-1} ^{(1)}}\\
            \log a_{p_1} ^{(1)} & \frac{1}{ a_{p_1} ^{(1)}} &  \cdots & \log  a_{p_1}^{(p_1-1)} & \frac{1}{ a_{p_1}^{(p_1-1)} }& \log \delta_{p_1} ^{(1)}&\frac{-1}{\delta_{p_1} ^{(1)}} \\
            \log a_{p_1+1} ^{(1)} &  \frac{1}{ a_{p_1+1} ^{(1)} } &  \cdots & \log  a_{p_1+1}^{(p_1-1)} & \frac{1}{ a_{p_1+1}^{(p_1-1)} }& \log \delta_{p_1+1} ^{(1)}&\frac{1}{\delta_{p_1+1} ^{(1)}}\\
             \vdots & \vdots & \ddots & \vdots & \vdots &\vdots &\vdots\\
            \log a_{n-2} ^{(1)} & \frac{1}{ a_{n-2} ^{(1)}} & \cdots &\log  a_{n-2}^{(p_1-1)} & \frac{1}{ a_{n-2}^{(p_1-1)}} & \log \delta_{n-2} ^{(1)}  &\frac{1}{\delta_{n-2} ^{(1)}}\\
            \log * & 1/* &\cdots & \log * & 1/* & \log a_{n-1} ^{(p_1)} &{1}/{a_{n-1} ^{(p_1)}}  \\
            0 & 1&\cdots & 0 & 1 & 0 & 1 \\
    \end{pmatrix}
    \end{eqnarray*}
    where again the wildcards `*' denote arbitrary but fixed choices of positive real numbers. We develop the determinant of $\tilde L^o(p_1,0)$ with respect to the elements of the $(n-1)$-st row. We easily see that we have
    \begin{align}
        \label{e.determinant2} \det(\tilde L^o(p_1,0))=A+B\log a_{n-1} ^{(p_1)}+1/a_{n-1} ^{(p_1)} \det(\tilde L^o(p_1-1,1)),
    \end{align}
    where the constants $A, B$ are fixed real numbers that depend on all the entries of $\tilde L^o(p_1,0)$ except $a_{n-1} ^{(p_1)}$. Again, since $\det (\tilde L^o(p_1-1,1))\neq 0$ by the inductive hypothesis, no matter what the value of the constants $A, B$ is, there is always a choice of $a_{n-1} ^{(p_1)}>0$ that makes the right hand side of \eqref{e.determinant2} non-zero. This completes the proof.
\end{proof}
\begin{remark}
    We showed in Lemma \ref{l.01constr} that there is a choice of $a_\nu ^{(b)}$, $1\leq \nu\leq n-1$, $1\leq b \leq p_1$, and $\delta_\nu ^{(b)}$ for $1\leq \nu\leq n-1$, $1\leq b \leq p_2$, such that $\det (\tilde L^o(p_1,p_2))$ is non-zero. However, if we consider $\det (\tilde L^o(p_1,p_2))$ as a function of the entries $a_\nu ^{(b)}, \delta_\nu ^{(b)}$, we see this is a real-analytic function in $(\mathbb R^+)^{(p_1+p_2)(n-1)}$. We conclude that the set $\{\det (\tilde L^o(p_1,p_2))=0\}$ is a closed set which has zero $(p_1+p_2)(n-1)$-dimensional Lebesgue measure. See for example \cite{KR}. This means that for any dimension $n$ and any choice of $p_1\geq 1$ and $p_2\geq 0$ with $n=2p_1+p_2$, there are `generic' choices of $a_\nu ^{(b)}, \delta_\nu ^{(b)}$ such that the matrix $\tilde L^o(p_1,p_2) $ is invertible.
\end{remark}
\begin{proof}
    [Proof of Theorem \ref{t.vectorsu}] We will prove the Theorem based on the construction of Lemma \ref{l.01constr}. The conclusions (iii) and (iv) of the Theorem are easy consequences of (i) and (ii) so we will accept them with no further comment. Let $n$ and $p_1\geq 1$, $p_2\geq 0$, be given non-negative integers such that $n=2p_1+p_2$. We need to define the vectors $u_1,u_2,\ldots,u_n,u_{n+1}\in\mathbb R^n$  of the form \eqref{e.u1}, with $u_1,\ldots,u_n$ linearly independent over $\mathbb R$, as well as positive irrational constants $c_1,\ldots,c_n$ such that
    \begin{align}
        \label{e.linear} u_{n+1}=-\sum_{\nu=1} ^n c_\nu u_\nu.
    \end{align}
    We will choose the vectors $u_1,\ldots,u_n$ to be the rows of an appropriately constructed matrix $\tilde L(p_1,p_2)$. To that end, we consider the matrix $\tilde L(p_1,p_2)$ whose first $n-1$ rows are defined as the corresponding $n-1$ rows of $\tilde L^o(p_1,p_2)$ provided by Lemma \ref{l.01constr}. We first give arbitrary positive irrational values to the constants $c_1,\ldots,c_{p_1}$. Then we choose the constants $c_{p_1+1},\ldots,c_{n-1}$ in $\mathbb R^+\setminus \mathbb Q$ so that
    \begin{align}\label{e.auxpositive}
        \sum_{\nu=p_1+1} ^{n-1}\frac{c_\nu}{a_\nu ^{(b)}} - \sum_{\nu=1} ^{p_1} \frac{c_\nu}{a_\nu ^{(b)}}> 0,
    \end{align}
for all $1\leq b \leq p_1$. Observe that this is always possible when $p_1+p_2\geq 2$ since there are at least $n-1-(p_1+1)+1=p_1+p_2-1\geq 1$ `free choices' in the first term of the above expression. Actually this is the only point where we have to consider $n\geq 3$. The value of $c_n$ we leave undetermined for now.

We define the auxiliary variables
    \begin{align*}
      \frac{1} {\tilde a_{n+1} ^{(b)} }\eqdef \sum_{\substack{\nu=1\\\nu\neq b }} ^{n-1}\frac{ c_\nu}{a_\nu ^{(b)}}-\frac{c_b}{a_b ^{(b)}}, \quad 1\leq b \leq p_1,
    \end{align*}
    and
    \begin{align}
        \label{e.mainhat} \log \frac{1}{\hat a_{n+1} ^{(b)}} \eqdef \sum_{\nu=1} ^{n-1} c_\nu \log a_\nu ^{(b)}.
    \end{align}
    Similarly, if $p_2\neq 0$ we define
    \begin{align*}
        \frac{1}{\tilde \delta _{n+1} ^{(b)}} \eqdef \sum_{\nu=1} ^{n-1} \frac{c_\nu} {\delta_\nu ^{(b)}}, \quad 1\leq b \leq p_2,
    \end{align*}
    and
    \begin{align*}
        \log \frac{1}{\hat \delta_{n+1} ^{(b)} }\eqdef \sum_{\nu=1} ^{n-1} c_\nu \log \delta_\nu ^{(b)}, \quad 1\leq b \leq p_2.
    \end{align*}
    Observe that by definition $\hat a_{n+1}^{(b)}>0$ for all $1\leq b \leq p_1$ and $\tilde \delta_{n+1} ^{(b)},\hat \delta_{n+1} ^{(b)}>0$ for all $1\leq b \leq p_2$. We also have that
    \begin{align*}
    \frac{1}{   \tilde a_{n+1}^{(b)}}=\sum_{\substack{\nu=1\\\nu\neq b }} ^{n-1} \frac{c_\nu}{a_\nu ^{(b)}}-\frac{c_b}{a_b ^{(b)}}\geq \sum_{\nu=p_1+1} ^{n-1} \frac{c_\nu}{a_\nu ^{(b)}}-\sum_{\nu=1} ^{p_1}\frac{c_\nu}{a_\nu ^{(b)}}>0,
    \end{align*}
by \eqref{e.auxpositive}.

     Now we define $u_{n}$. For $c_{n}>0$ and for each $1\leq b\leq p_1$, we define $a_{n} ^{(b)}=a_n ^{(b)}(c_n)$ to be the unique positive solution of the non-linear equation
    \begin{align}
        \label{e.main1} (a_{n} ^{(b)})^{c_{n}+1}-\frac{\hat a_{n+1} ^{(b)}}{\tilde a_{n+1} ^{(b)}} a_{n} ^{(b)}-\hat a_{n+1} ^{(b)} c_n=0, \quad 1\leq b\leq p_1,
    \end{align}
    as a function of $c_{n}$. Similarly, for each $1\leq b \leq p_2$ we define $\delta_{n} ^{(b)}=\delta_{n} ^{(b)}(c_n)$ to be the unique solution of the non-linear equation
    \begin{align}
        \label{e.main2} (\delta_{n} ^{(b)})^{c_{n}+1}-\frac{\hat \delta_1 ^{(b)}}{\tilde \delta_{n+1} ^{(b)}}\delta_{n} ^{(b)}- \hat \delta_{n+1} ^{(b)} c_{n} =0, \quad 1\leq b\leq p_2,
    \end{align}
    again as a function of $c_{n}$. In order to define these solutions, we rely on Lemma \ref{l.nonlinear}.

\begin{remark} A few words are necessary to justify the definitions \eqref{e.main1} and \eqref{e.main2}. Observe, for example, that we need to define the numbers $c_n$ and $a_n ^{(b)}$ so that condition \eqref{e.linear} is satisfied. For any $1\leq b\leq p_1$, \eqref{e.linear} reads:
    \begin{align}\label{e.mot1}
        \log \frac{1}{a_{n+1} ^{(b)}}&=\sum_{\nu=1} ^n c_\nu \log a_\nu ^{(b)}=\sum_{\nu=1} ^{n-1} c_\nu \log a_\nu ^{(b)}+c_n \log a_n ^{(b)}=\log\frac{1}{\hat a ^{(b)} _{n+1}}+c_n\log a_n ^{(b)} \\ &= \log \frac{(a_n ^{(b)})^{c_n}}{\hat a_{n+1} ^{(b)}},
    \end{align}
and
    \begin{align}   \label{e.mot2}
        \frac{1}{a_{n+1} ^{(b)}}&=\sum_{\substack{\nu=1\\\nu\neq b }} ^{n-1} \frac{c_\nu}{a_\nu ^{(b)}} -\frac{c_b}{a_b ^{(b)}}+\frac{c_n}{a_n ^{(b)}}=\frac{1}{\tilde a_{n+1} ^{(b)}} +\frac{c_n}{a_n ^{(b)}}.
    \end{align}
Combining \eqref{e.mot1} and \eqref{e.mot2} we conclude that $c_n$ and $a_b ^{(b)}$ must satisfy \eqref{e.main1}. On the other hand, for $1\leq b \leq p_2$, equation \eqref{e.linear} gives
\begin{align}\label{e.mot3}
    \log \frac{1}{\delta_{n+1} ^{(b)}}= \frac{1}{\tilde \delta_{n+1} ^{(b)}}+\frac{c_n}{\delta_{n} ^{(b)}}
\end{align}
Technically speaking, it is enough to take arbitrary values for $\delta_n ^{(b)}$ and define $\delta_{n+1} ^{(b)}$ exactly by \eqref{e.mot3}. However, we prefer to define $\delta_{n+1} ^{(b)}$ again by the more restrictive non-linear equation \eqref{e.main2} for consistency.
\end{remark}

    For each $c_{n}>0$, the matrix $\tilde L(p_1,p_2)$ is defined in all its entries. We consider $\det (\tilde L(p_1,p_2))$ as a function of $c_{n}$. Suppose that we have $\det (\tilde L(p_1,p_2))(c_{n})=0$ for all irrational $c_{n}>0$. In this case there is a sequence of irrational $c_{n}\rightarrow +\infty$ for which we have that $\lim_{c_{n}\rightarrow +\infty}\det (\tilde L(p_1,p_2))(c_{n})=0$. However, according to Lemma \ref{l.nonlinear}, we have that 
\begin{align*}
	\lim_{c_n\rightarrow+\infty}a_n ^{(b)}(c_n)=1, \quad 1\leq b \leq p_1,
\end{align*}
and
\begin{align*}
	\lim_{c_n\rightarrow+\infty}\delta_n ^{(b)}(c_n)=1,\quad 1\leq b \leq p_2.
\end{align*}
But this means that
    \begin{align*}0=\lim_{c_{n}\rightarrow +\infty} \det (\tilde L(p_1,p_2)) (c_n)= \det(\tilde L^o(p_1,p_2)),\end{align*}
     which is a contradiction since we have chosen $\tilde L^o(p_1,p_2)$ so that its determinant is non-zero. We conclude that for large enough irrational and positive $c_{n}$, the matrix $\tilde L (p_1,p_2)$ has non-zero determinant. This means that its rows, the vectors $u_1,\ldots,u_{n}$, are linearly independent over $\mathbb R$.

    Finally, we define $a_{n+1} ^{(b)}$, $1\leq b \leq p_1$, exactly by the desired property \eqref{e.linear}:
    \begin{align}
        \label{e.cheapa} \frac{1}{a_{n+1} ^{(b)}} \eqdef \sum_{\substack{\nu=1\\\nu\neq b}} ^{n} \frac{c_\nu}{ a_\nu ^{(b)}}-\frac{c_b}{a_b ^{(b)}}=\sum_{\substack{\nu=1\\\nu\neq b}} ^{n-1} \frac{c_\nu }{a_\nu ^{(b)}}-\frac{c_b}{a_b ^{(b)}}+\frac{c_n}{a_n ^{(b)}}=\frac{1}{\tilde a_{n+1} ^{(b)}}+\frac{c_n}{a_n ^{(b)}}>0.
    \end{align}
Similarly, we define $\delta_{n+1} ^{(b)}$, $1\leq b \leq p_2$, as
    \begin{align}
        \label{e.cheapdelta} \frac{1}{\delta_{n+1} ^{(b)}}\eqdef \sum_{\nu=1} ^{n}\frac{ c_\nu}{ \delta_\nu ^{(b)}}.
    \end{align}
    It remains to check the validity of \eqref{e.linear}. We have for every $1\leq b \leq p_1$, that
    \begin{align*}
        \log \frac{1}{a_{n+1} ^{(b)}} &= \log \big(\frac{1}{\tilde a_{n+1} ^{(b)}} + \frac{c_{n}}{a_{n} ^{(b)}}\big) =\log \big( \frac{(a_{n+1} ^{(b)})^{c_{n+1}} }{\hat a_{n+1} ^{(b)}}\big),
    \end{align*}
    where in the last equality we have used the definition of $a_{n+1} ^{(b)}$, equation \eqref{e.main1}. Now, using the definition of $\hat a_1 ^{(b)}$ in equation \eqref{e.mainhat}, we get
    \begin{align*}
        \log a_{n+1} ^{(b)}=-c_{n+1} \log a_{n+1} ^{(b)}+\log \hat a_{n+1} ^{(b)}=-c_{n+1} \log a_{n+1} ^{(b)}-\sum_{\nu=2} ^{n+1} c_\nu \log a_\nu ^{(b)}=-\sum_{\nu=2} ^{n+1}c_\nu \log a_\nu ^{(b)}.
    \end{align*}
    A similar calculation for $1\leq b \leq p_2$ shows that $\log \delta_1 ^{(b)}=-\sum_{\nu=2} ^{n+1} c_\nu \log \delta_\nu ^{(b)}.$ This together with definitions \eqref{e.cheapa} and \eqref{e.cheapdelta} shows that $u_{n+1}=-\sum_{\nu=1} ^n c_\nu u_\nu$ and completes the proof of the Theorem.
\end{proof}
% sub.n>3(end)
%%%%%%%%%%%%%%%%%%%%%%%%%%%%%% SECTION  SECTION SECTION
%%%%%%%%%%%%%%%%%%%%%%%%%%%%%% SECTION  SECTION SECTION
\subsection{The proof in the case $n=2$.} % (fold)
\label{sub.n=2} Here we have that $p_1=1$ and $p_2=0$. We define the vector
\begin{align}
    \gamma^{(1)}\eqdef (a_1 ^{(1)},a_2 ^{(1)},-a_3 ^{(1)}),
\end{align}
where $a_\nu ^{(1)}>0$ for $1\leq \nu\leq 3$. We define the triple of matrices $J=(J_1,J_2,J_3)$ by means of 
\begin{align*}
	J_\nu\eqdef \Jrd_{2,\gamma_\nu ^{(1)}}=\begin{pmatrix} \gamma_\nu ^{(1)} & 1 \\ 0 & \gamma_\nu ^{(1)}\end{pmatrix}, \quad 1\leq \nu \leq 3.
\end{align*}
With $\Gamma=\{\gamma_\nu ^{(1)}\}$, Lemma \ref{l.<3hyper} says that $J$ is hypercyclic if and only if $\overline{\{V(m,\Gamma):m\in\mathbb N^3\}}=\mathbb R^2$. Recall that for $m\in\mathbb N^3$
\begin{align*}
    V(m,\Gamma)=\big((\gamma^{(1)})^m,\sum_{\nu=1} ^3\frac{m_j}{\gamma_j ^{(1)}}\big).
\end{align*}
We have the analogue of Lemma \ref{l.positive} whose proof we omit.
\begin{lemma}\label{l.positive2}
    Suppose that the set $\{V(2m,\Gamma):m\in\mathbb N^3\}$ is dense in $\mathbb R^+\times \mathbb R$. Then $\{V(2m+1,\Gamma):m\in\mathbb N^3\}$ is dense in $\mathbb R^-\times \mathbb R$. We conclude that the set $\{V(m,\Gamma):m\in\mathbb N^3\}$ is dense in $\mathbb R^2$.
\end{lemma}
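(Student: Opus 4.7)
The plan is to adapt the argument of Lemma \ref{l.positive} to this much simpler single-block setting. Here $p_1=1$ and $p_2=0$, and the only negative eigenvalue among the entries of $\gamma^{(1)}$ is $-a_3^{(1)}$, so the sign of $(\gamma^{(1)})^m$ is governed solely by the parity of $m_3$. This naturally splits the orbit $\{V(m,\Gamma):m\in\mathbb N^3\}$ into an even piece (producing a positive first coordinate) and an odd piece (a negative one). The hypothesis covers the even piece; the real work is to transfer density to the odd piece, after which the final conclusion about $\mathbb R^2$ is immediate.

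To do the transfer, I would first unwind $V(2m+1,\Gamma)$ in terms of $V(2m,\Gamma)$. Factoring out one copy of $\gamma^{(1)}$ from the power and using $\sum_\nu (2m_\nu+1)/\gamma_\nu^{(1)}=\sum_\nu 2m_\nu/\gamma_\nu^{(1)}+\sum_\nu 1/\gamma_\nu^{(1)}$ yields the Hadamard-plus-shift identity
\begin{align*}
V(2m+1,\Gamma)=V_2\circ V(2m,\Gamma)+V_1,
\end{align*}
where $V_2\eqdef(-a_1^{(1)}a_2^{(1)}a_3^{(1)},\,1)$ and $V_1\eqdef(0,\,1/a_1^{(1)}+1/a_2^{(1)}-1/a_3^{(1)})$. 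The key point is that the first coordinate of $V_2$ is negative, which is precisely the sign flip we need. This plays the role of the vectors $V_1,V_2$ in \eqref{e.inner}, with the choice of shift being forced rather than parameter-dependent.

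Given a target $x=(x_1,x_2)\in\mathbb R^-\times\mathbb R$, I would then solve $V_2\circ y+V_1=x$ for $y$; since both $x_1$ and the first coordinate of $V_2$ are negative, the resulting $y$ lies in $\mathbb R^+\times\mathbb R$. The hypothesis of the lemma produces a sequence $m^{(\tau)}\in\mathbb N^3$ with $V(2m^{(\tau)},\Gamma)\to y$, and pushing through the affine identity gives $V(2m^{(\tau)}+1,\Gamma)\to V_2\circ y+V_1=x$. Hence $\{V(2m+1,\Gamma):m\in\mathbb N^3\}$ is dense in $\mathbb R^-\times\mathbb R$. For the concluding statement, the set $\{V(m,\Gamma):m\in\mathbb N^3\}$ contains both families $\{V(2m,\Gamma)\}$ and $\{V(2m+1,\Gamma)\}$, whose closures already cover $\overline{\mathbb R^+\times\mathbb R}$ and $\overline{\mathbb R^-\times\mathbb R}$; their union is dense in $\mathbb R^2$.

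There is no serious obstacle here beyond carefully tracking signs; the single-block nature of the problem makes the bookkeeping substantially lighter than in the $n\geq 3$ case handled by Lemma \ref{l.positive}, and no analogue of the general sign-correction vector $\sigma$ from that proof is needed — the fixed choice $\sigma=(1,1,1)$ suffices.
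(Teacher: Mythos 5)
Your proof is correct, and it is precisely the $n=2$ adaptation of the affine-identity/sign-correction argument from Lemma \ref{l.positive} that the paper omits: you correctly observe that with $\gamma^{(1)}=(a_1^{(1)},a_2^{(1)},-a_3^{(1)})$ the sign of $(\gamma^{(1)})^m$ depends only on the parity of $m_3$, so the fixed shift $\sigma=(1,1,1)$ replaces the target-dependent $\sigma$ of the general case, and the Hadamard identity $V(2m+1,\Gamma)=V_2\circ V(2m,\Gamma)+V_1$ with $V_2=(-a_1^{(1)}a_2^{(1)}a_3^{(1)},1)$ and $V_1=(0,1/a_1^{(1)}+1/a_2^{(1)}-1/a_3^{(1)})$ then transports density from $\mathbb R^+\times\mathbb R$ to $\mathbb R^-\times\mathbb R$ exactly as needed.
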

Consider the matrix $L^+$, \begin{align*}
    L^+=
    \begin{pmatrix}
        \log a_1 ^{(1)} & \log  a_2 ^{(1)} & \log a_3 ^{(1)} \\
        {1}/{ a_1 ^{(1)}} & {1}/{ a_2 ^{(1)}} & -{1}/{ a_3 ^{(1)}}
    \end{pmatrix}.
\end{align*}
Observe that in order to show that the set $V(2m,\Gamma)$ is dense in $\mathbb R^+\times\mathbb R$, it is enough to show that $\{L^+ m^T:m\in\mathbb N^3\}$ is dense in $\mathbb R^2$. We define the vectors $u_1,u_2,u_3$ as the corresponding columns of $L^+$,
\begin{align}\label{e.vectorsu2}
\notag  u_1 &\eqdef\big (\log a_1 ^{(1)},\frac{1}{a_1 ^{(1)}}\big),\\
    u_2 &\eqdef \big(\log a_2 ^{(1)},\frac{1 }{a_2 ^{(1)}}\big),\\
\notag  u_3 &\eqdef\big (\log a_3 ^{(1)},\frac{-1}{a_3 ^{(1)}}\big).
\end{align}
We have the analogue of Theorem \ref{t.vectorsu} which again is the main part of the proof.
\begin{theorem}\label{t.vectorsu2}
 There exist real numbers $a_\nu ^{(1)}>0$, $1\leq \nu\leq 3$ such that, if we define the vectors $u_1,u_2,u_3$ by \eqref{e.vectorsu2}, the following conditions are satisfied:
    \begin{list}
        {}{}
        \item{(i)} The vectors $u_1,u_2$ are linearly independent over $\mathbb R$.
        \item{(ii)} There exist positive \textbf{irrational} numbers $c_1,c_2$ such that $u_3=-(c_1u_1+c_2u_2)$.
    \end{list}
    We conclude that
    \begin{list}{}{}
        \item{(iii)} Any $2$ of the vectors $u_1,u_2,u_3$ are $\mathbb R$-linearly independent.
        \item{(iv)} The vectors $u_1,u_2,u_3$ are $\mathbb Q$-linearly independent.
    \end{list}
\end{theorem}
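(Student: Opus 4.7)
The plan is to mirror the strategy used in the proof of Theorem \ref{t.vectorsu}, now in the degenerate $n=2$ case. Since $p_1=1$ and $p_2=0$, there are only two scalars $c_1,c_2$ to determine along with the three positive numbers $a_1^{(1)},a_2^{(1)},a_3^{(1)}$, and the construction becomes correspondingly simpler. The engine will again be Lemma \ref{l.nonlinear}, used to fold the two coordinate identities in condition (ii) into a single non-linear equation.

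First I would freely pick a positive irrational $c_1$ and any $a_1^{(1)}>0$ with $a_1^{(1)}\neq 1$. The non-unit choice plays the role of the non-degeneracy built into the matrix $\tilde L^o$ in the higher-dimensional construction: it ensures $a_1^{(1)}\log a_1^{(1)}\neq 0$, which I will use at the end to obtain linear independence of $u_1,u_2$. Writing out $u_3=-(c_1u_1+c_2u_2)$ coordinatewise gives the two identities $\log a_3^{(1)}=-c_1\log a_1^{(1)}-c_2\log a_2^{(1)}$ and $1/a_3^{(1)}=c_1/a_1^{(1)}+c_2/a_2^{(1)}$. Eliminating $a_3^{(1)}$ from these two identities is equivalent to the single relation
\begin{align*}
(a_1^{(1)})^{c_1}(a_2^{(1)})^{c_2}=\frac{c_1}{a_1^{(1)}}+\frac{c_2}{a_2^{(1)}},
\end{align*}
and multiplying through by $a_2^{(1)}/(a_1^{(1)})^{c_1}$ rearranges it as
\begin{align*}
(a_2^{(1)})^{c_2+1}-\frac{c_1}{(a_1^{(1)})^{c_1+1}}\,a_2^{(1)}-\frac{c_2}{(a_1^{(1)})^{c_1}}=0.
\end{align*}
This is precisely of the form to which Lemma \ref{l.nonlinear} applies, with $\delta_1=c_1/(a_1^{(1)})^{c_1+1}>0$, $\delta_2=1/(a_1^{(1)})^{c_1}>0$ and $c=c_2$. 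For every $c_2>0$ it therefore has a unique positive solution $a_2^{(1)}=a_2^{(1)}(c_2)$, and $a_2^{(1)}(c_2)\to 1$ as $c_2\to+\infty$.

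The remaining task is to choose $c_2$ appropriately. The determinant of the matrix whose rows are $u_1$ and $u_2$ equals
\begin{align*}
\frac{\log a_1^{(1)}}{a_2^{(1)}}-\frac{\log a_2^{(1)}}{a_1^{(1)}}=\frac{a_1^{(1)}\log a_1^{(1)}-a_2^{(1)}\log a_2^{(1)}}{a_1^{(1)}a_2^{(1)}}.
\end{align*}
Since $a_2^{(1)}(c_2)\to 1$, the numerator converges to $a_1^{(1)}\log a_1^{(1)}\neq 0$, so the determinant is non-zero for every sufficiently large $c_2$. Because the positive irrationals are dense, I can pick such a $c_2$ to be positive and irrational. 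I then define $a_3^{(1)}$ by $1/a_3^{(1)}:=c_1/a_1^{(1)}+c_2/a_2^{(1)}$, which is automatically positive; the non-linear equation guarantees that the other coordinate identity in (ii) is also satisfied.

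The main (modest) obstacle is simply bookkeeping: verifying that the single scalar equation produced by Lemma \ref{l.nonlinear} really encodes both coordinate conditions of $u_3=-c_1u_1-c_2u_2$ simultaneously. Assertions (iii) and (iv) then follow from (i) and (ii) by the same elementary argument used after Theorem \ref{t.vectorsu}: any proper $\mathbb R$-linear relation among two of $u_1,u_2,u_3$ combined with (ii) contradicts (i), and any non-trivial $\mathbb Q$-linear relation $q_1u_1+q_2u_2+q_3u_3=0$ rewritten with (ii) forces either $q_1=q_2=q_3=0$ or $c_1=q_1/q_3\in\mathbb Q$, contradicting the choice of $c_1$ as irrational.
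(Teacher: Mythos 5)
Your proposal is correct and takes essentially the same approach as the paper: you derive the non-linear equation \eqref{e.nonlinear2} by eliminating $a_3^{(1)}$ from the two coordinate identities in (ii), invoke Lemma \ref{l.nonlinear} to get $a_2^{(1)}(c_2)\to 1$, and deduce non-vanishing of the $2\times 2$ determinant (which is $\det\tilde L(1,0)\to\det\tilde L^o(1,0)=\log a_1^{(1)}\neq 0$) for large irrational $c_2$, exactly as in the paper.
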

Using theorem \ref{t.vectorsu2}, we can conclude the proof of Theorem \ref{t.njordan} for $\mathbb R^2$ just like in the proof of the case $n\geq 3$. We will only describe how one proves Theorem \ref{t.vectorsu}, giving an argument very close to the one given in the proof of Theorem \ref{t.vectorsu}.
\begin{proof}[Proof of Theorem \ref{t.vectorsu2}] Consider the matrix
    \begin{align*}
        \tilde L^o (1,0)=
        \begin{pmatrix}
            \log a_1^{(1)} & 1/a_1^{(1)}\\
            0 &1,
        \end{pmatrix}
        ,
    \end{align*}
where $a_1 ^{(1)}\neq 1$. Obviously we then have $\det(\tilde L^o(1,0))\neq 0$. We give an arbitrary positive and irrational value to the constant $c_1$. Now for every $c_2>0$ we define $a_2 ^{(1)}=a_2 ^{(1)}(c_2)$ as the unique solution of the non-linear equation
\begin{align}\label{e.nonlinear2}
    (a_2 ^{(1)}) ^{c_2+1}-\frac{c_1}{(a_1 ^{(1)})^{c_1+1} } a_2 ^{(1)}- \frac{1}{(a_1 ^{(1)})^{c_1} } c_2 =0,
\end{align}
as a function of $c_2$. This is possible because of Lemma \ref{l.nonlinear}. The same Lemma also gives that $\lim_{\substack{c_2\rightarrow+\infty\\ c_2\in\mathbb R^+\setminus\mathbb Q}} a_2 ^{(1)}(c_2) = 1$. Therefore the matrix
\begin{align*}
\tilde L (1,0)=
        \begin{pmatrix}
            \log a_1^{(1)} & 1/a_1^{(1)}\\
            \log a_2 ^{(1)} &1/a_2 ^{(1)}
        \end{pmatrix},
\end{align*}
satisfies 
\begin{align*}
	\lim_{\substack{c_2\rightarrow+\infty\\ c_2\in\mathbb R^+\setminus\mathbb Q}} \det(\tilde L (1,0))=\det (\tilde L^o (1,0))\neq 0.
\end{align*} 
Choosing $c_2$ large enough in $\mathbb R^+\setminus \mathbb Q$ we get that the vectors $u_1,u_2$ are linearly independent over $\mathbb R$. Now we define
\begin{align*}
    \frac{1}{a_3 ^{(1)}} &\eqdef \frac{c_1}{a_1 ^{(1)}}+\frac{c_2}{a_2 ^{(1)}}.
\end{align*}
It remains to check the validity of $u_3=-(c_2u_2+c_3u_3)$. For this observe that
\begin{align*}
\log\frac{1}{a_3 ^{(1)}}=\log\big(\frac{c_1}{a_1 ^{(1)} }+ \frac{c_2 }{a_2 ^{(1)}}\big)=\log\big( \frac{(a_1 ^{(1)})^{c_1+1} (a_2 ^{(1)})^{c_2+1}}{a_1 ^{(1)} a_2 ^{(1)}}\big)=c_1\log a_1 ^{(1)}+c_2 \log a_2 ^{(1)},
\end{align*}
where the second equality is due to the non-linear equation \eqref{e.nonlinear2}.
\end{proof}
% sub.n=2(end)

\bibliographystyle{amsplain}
\bibliography{jordan}

\end{document}